\newcommand{\pdim}{\mathop{\mathrm{pd}}\nolimits}
\newcommand{\gldim}{\mathop{\mathrm{gl.dim}}\nolimits}
\newcommand{\Inf}{\mathop{\mathrm{Inf}}\nolimits}
\newcommand{\inv}{^{-1}}
\newcommand{\p}{\varphi}
\newcommand{\ov}[1]{\ensuremath{\overline {#1}}}
\newcommand{\til}[1]{\ensuremath{\widetilde {#1}}}
\newcommand{\Hom}{\mathop{\mathrm{Hom}}\nolimits}
\newcommand{\Ext}{\mathop{\mathrm{Ext}}\nolimits}
\newcommand{\Tor}{\mathop{\mathrm{Tor}}\nolimits}
\newtheorem{Thm}{Theorem}[section]
\newtheorem{Prop}[Thm]{Proposition}
\newtheorem{Lemma}[Thm]{Lemma}
{\theoremstyle{definition}
}
{\theoremstyle{remark}
\newtheorem{Rmk}[Thm]{Remark}}
\newtheorem{Cor}[Thm]{Corollary}
{\theoremstyle{remark}
}
{\theoremstyle{remark}
}
\theoremstyle{remark}
\theoremstyle{remark}
\theoremstyle{remark}
\theoremstyle{remark}
\newtheorem*{Claim*}{Claim}}
\newtheorem*{thmA}{Theorem~A}
\newtheorem*{thmB}{Theorem~B}
\newtheorem*{thmC}{Theorem~C}
\numberwithin{equation}{section}
\title[Moore spaces as classifying spaces of finite semigroups]{Realizing wedges of Moore spaces as classifying spaces of finite semigroups}
\author{Aris Martinian}
\address[A.~Martinian]{%
    Department of Mathematics\\
    City College of New York\\
    Convent Avenue at 138th Street\\
    New York, New York 10031\\
    USA}
\email{arismartinian@gmail.com}
\author{Benjamin Steinberg}
\address[B.~Steinberg]{%
    Department of Mathematics\\
    City College of New York\\
    Convent Avenue at 138th Street\\
    New York, New York 10031\\
    USA}
\email{bsteinberg@ccny.cuny.edu}
\thanks{The first author was supported by NSF DMS-2452324,  Simons Foundation Collaboration Grant 849561, Australian Research Council Grant DP230103184, and Marsden Fund Grant MFP-VUW2411.}
\date{November 16, 2025}
\dedicatory{To Mikhail Volkov on his seventieth birthday}
\keywords{Classifying spaces, semigroups, Moore spaces}
\subjclass[2020]{20M50,55U10}
\begin{document}

\begin{abstract}
Fiedorowicz suggested that it was likely that every finite simply connected CW complex is homotopy equivalent to the classifying space of a finite semigroup.  We prove that every finite wedge of simply connected Moore spaces of finitely generated abelian groups is homotopy equivalent to the classifying space of a finite semigroup.  Consequently, homology groups alone cannot preclude a finite simply connected CW complex from being homotopy equivalent to the classifying space of a finite semigroup.
\end{abstract}

\maketitle
\section{Introduction}
A CW complex is homotopy equivalent to the classfying space of a group if and only if it is aspherical.  It was shown by Dusa McDuff~\cite{McDuff} that every connected CW complex is homotopy equivalent to the classifying space of a monoid (see~\cite{Fiedor} for an alternate proof).  McDuff's construction invariably yields an infinite monoid, and in fact the fundamental group of the classifying space of a finite monoid must be finite.

In his paper~\cite{Fiedor}, Z. Fiedorowicz observed that the $2$-sphere is homotopy equivalent to the classifying space of a $5$-element monoid, and wrote:
\begin{quote}
It seems likely that any finite simply connected complex
should be equivalent to the classifying space of a finite monoid.
\end{quote}

In this paper we build upon some results by Sweeney~\cite{homotopysmall} to make some significant progress on this conjecture.  Note that if $X$ is a finite simply connected CW complex, then $H_0(X)\cong \mathbb Z$, $H_1(X)=0$, $H_n(X)$ is finitely generated for $n\geq 2$, and moreover $H_n(X)=0$ if $n> \dim X$.  Our main theorem shows that we can realize all possible homology groups of simply connected CW complexes from a finite monoid.  More precisely, we prove:

\begin{thmA}
Let $A_2,A_3,\ldots, A_m$ be a finite sequence of finitely generated abelian groups.  Then there is a finite monoid $M$ with simply connected classifying space $BM$ such that $H_n(BM)\cong A_n$ for $2\leq n\leq m$, and $H_n(BM)=0$ for $n>m$. 
\end{thmA}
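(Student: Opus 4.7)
The plan is to realize the prescribed homology by constructing a finite monoid whose classifying space is a wedge of simply connected Moore spaces. As a first reduction, since $M(A\oplus B,n)\simeq M(A,n)\vee M(B,n)$ and every finitely generated abelian group decomposes as $\mathbb Z^r\oplus\bigoplus_j\mathbb Z/k_j\mathbb Z$, the space $X=\bigvee_{n=2}^m M(A_n,n)$ is homotopy equivalent to a finite wedge whose factors are spheres $S^n=M(\mathbb Z,n)$ or cyclic Moore spaces $M(\mathbb Z/k\mathbb Z,n)$ for integers $n\geq 2$ and $k\geq 2$. Since the reduced homology of a wedge is the direct sum of the reduced homologies of its factors, any such $X$ automatically satisfies $H_n(X)\cong A_n$ for $2\leq n\leq m$ and $H_n(X)=0$ for $n>m$. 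I would then invoke the results of Sweeney~\cite{homotopysmall} (which build on Fiedorowicz's $5$-element monoid with $BM\simeq S^2$) to realize each individual sphere $S^n$ and each cyclic Moore space $M(\mathbb Z/k\mathbb Z,n)$, for $n\geq 2$, as the classifying space of a finite monoid.

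The decisive step is a wedge-at-the-basepoint construction on finite semigroups: given finite monoids $M_1,\ldots,M_r$ with simply connected classifying spaces, I would assemble a single finite semigroup $S$ with $BS\simeq BM_1\vee\cdots\vee BM_r$. A natural candidate is the $0$-bouquet: delete the identity of each $M_i$, take the disjoint union of the resulting semigroups together with a formal absorbing element $0$ which sends all cross products to $0$, and then adjoin a fresh overall identity. The nerve of this semigroup should, after collapsing the simplices containing $0$ or the adjoined identity, reduce to a wedge of the nerves of the $M_i$ at a common vertex; van Kampen then ensures $BS$ is simply connected, and a Mayer--Vietoris (or direct simplicial) computation identifies $H_*(BS)$ with $\bigoplus_i H_*(BM_i)$ in positive degrees.

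The main obstacle, and the step requiring the most care, is verifying rigorously that this semigroup-level wedge really does induce the topological wedge on classifying spaces—rather than a join, smash, or other gluing—while simultaneously preserving finiteness and simple connectivity. One subtlety is that collapsing the absorbing $0$ must be shown to be a homotopy equivalence of nerves, which typically requires a contractibility argument for the subcomplex of simplices containing $0$ (for instance via a cone on $0$). Once this is established, Theorem~A follows by applying the wedge construction to the finite monoids produced in the second step, one for each cyclic summand of each $A_n$ in degree $n$: the result is a finite monoid $M$ with $BM\simeq X$, and hence with the prescribed homology groups.
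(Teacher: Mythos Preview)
Your proposed wedge construction fails outright. Any monoid $M$ containing a two-sided zero $z$ has contractible classifying space: since $z$ is idempotent and $mz=z$ for all $m\in M$, the module $\mathbb ZMz$ is isomorphic to the trivial module $\mathbb Z$, which is therefore a direct summand of $\mathbb ZM$ and hence projective, so $H_n(M,\mathbb Z)=0$ for all $n\geq 1$; moreover, in the group completion $G(M)$ the relation $z^2=z$ forces $z=1$, and then $zm=z$ forces $m=1$, so $\pi_1(BM)\cong G(M)$ is trivial. Your $0$-bouquet, after adjoining the identity, is exactly such a monoid, so its classifying space is a point, not $BM_1\vee\cdots\vee BM_r$. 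The ``collapse the simplices containing $0$'' heuristic does not salvage this: that subcomplex is not a contractible cone over a wedge, it is essentially all of $BS$. The paper's wedge construction (Theorem~\ref{t:wedge}) avoids this by never introducing a zero; instead it takes a carefully chosen submonoid $(M\times\{1\})\cup(K\times N)$ of $M\times N$, where $K$ is the minimal ideal of $M$ and must be a rectangular band, and proves the wedge homotopy type via equivariant classifying spaces. This rectangular-band hypothesis is what makes the argument work and is why the paper tracks it through every step.

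There is a second gap: you attribute to Sweeney the realization of $M(\mathbb Z/k\mathbb Z,n)$ for all $k,n\geq 2$, but Sweeney only produced $M(C_2,2)$ and the suspension construction $J(S)$. Realizing $M(C_k,2)$ for $k\geq 3$ by a finite monoid---indeed, by one whose minimal ideal is a rectangular band, as required for the wedge step---is one of the main new contributions of this paper (the semigroups $S_n$ of Theorem~\ref{t:moore.cyclic}).
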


Theorem~A is obtained as a corollary of a more precise result.  Recall that if $A$ is an abelian group and $n\geq 2$, then a CW complex is a Moore space of type $M(A,n)$ if it is simply connected and its reduced homology is $A$ in degree $n$ and $0$ in all other degrees.  An $M(A,n)$ is unique up to homotopy equivalence.  A wedge sum of Moore spaces $M(A_n,n)$ with $2\leq n\leq m$ is a finite simply connected CW complex that has the homology indicated in Theorem~A.  A simply connected CW complex is homotopy equivalent to a wedge of Moore spaces if and only if the Hurewicz homomorphisms $h_n\colon \pi_n(X,x_0)\to H_n(X)$ are split epimorphisms for all $n\geq 2$~\cite[Proposition~2.6.15]{baueshomotopy}.  Thus Theorem~A is a corollary of the following Theorem~B.

\begin{thmB}
If $X$ is a finite simply connected CW complex such that the Hurewicz homomorphisms $h_n\colon \pi_n(X,x_0)\to H_n(X)$ are split epimorphisms for all $n\geq 2$, that is, $X$ is homotopy equivalent to a finite wedge of Moore spaces of finitely generated abelian groups, then $X$ is homotopy equivalent to the classifying space of a finite monoid.
\end{thmB}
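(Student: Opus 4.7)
The strategy is to reduce to the simplest building blocks and then splice them together monoid-theoretically. Using the structure theorem for finitely generated abelian groups together with the identification $M(A\oplus B, n) \simeq M(A,n) \vee M(B,n)$, I would first reduce the statement of Theorem~B to the case where $X$ is a finite wedge of Moore spaces of the form $S^n = M(\mathbb{Z}, n)$ and $M(\mathbb{Z}/k, n)$ with $n\geq 2$ and $k\geq 2$. The problem then splits cleanly into producing a finite monoid for each such cyclic Moore space and combining these monoids in a way that models the topological wedge at the level of classifying spaces.

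For the individual cyclic factors, I would show that each $S^n$ and each $M(\mathbb{Z}/k, n)$ is homotopy equivalent to the classifying space of a finite monoid. Fiedorowicz's five-element monoid already handles $S^2$; for higher spheres one can iterate a combinatorial suspension, and the mod-$k$ Moore space $M(\mathbb{Z}/k,n)$ arises by attaching a single $(n+1)$-cell to such an $S^n$-monoid along a degree-$k$ word. The natural framework for this construction is Sweeney's~\cite{homotopysmall} machinery: encode the desired CW structure in a finite small category (or quiver with relations) whose nerve realizes the space, then invoke a transfer principle that converts the combinatorial data into a genuine finite monoid without disturbing homotopy type.

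It then remains to establish that a finite wedge of classifying spaces of finite monoids is itself the classifying space of a finite monoid. A natural candidate is the following ``wedge-of-monoids'' construction: given finite monoids $M_1,\ldots,M_r$, form the monoid with underlying set $\{1\} \cup \{0\} \cup \bigsqcup_i (M_i \setminus \{1\})$, with multiplication inherited inside each $M_i$ and every cross-factor product sent to an absorbing $0$. One then verifies that the extraneous simplices introduced by the zero assemble into a contractible subcomplex of the nerve, so that collapsing them yields the desired wedge $\bigvee_i BM_i$.

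The main obstacle I anticipate is the construction of a \emph{finite monoid} (rather than a finite small category) with classifying space $M(\mathbb{Z}/k, n)$ for arbitrary $k$ and $n$. Small categories have enough flexibility that Sweeney's results make them tractable, but restricting to a one-object category (i.e.\ a monoid) forces all higher simplices to be realized by composable strings of non-identity arrows drawn from a single generating set. Controlling the attaching map of the top $(n+1)$-cell by means of a finite list of monoid relations, while simultaneously preventing the introduction of unwanted homology in intermediate dimensions, is where the technical heart of the argument should lie.
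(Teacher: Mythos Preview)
Your overall reduction---decompose into cyclic Moore pieces, build each one, then wedge---matches the paper's strategy, and your remarks on $S^2$ and on suspending via Sweeney's construction are in line with what the paper does.  The fatal gap is the wedge step, which is precisely the part the paper singles out as the genuine obstacle.

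Your monoid $W=\{1\}\cup\{0\}\cup\bigsqcup_i(M_i\setminus\{1\})$ has a two-sided absorbing zero, and any monoid with a zero has \emph{contractible} classifying space.  Algebraically, $0$ is a central idempotent in $\mathbb ZW$, so $\mathbb ZW\cong \mathbb Z\!\cdot\!0\times \mathbb ZW(1-0)\cong \mathbb Z\times \mathbb Z_{\mathbb 0}W$.  On the trivial module the element $0$ acts as the identity, hence $1-0$ acts as zero, so the trivial module lives entirely over the $\mathbb Z$ factor and $H_n(W)\cong \Tor_n^{\mathbb Z}(\mathbb Z,\mathbb Z)=0$ for all $n\geq 1$.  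Since also $\pi_1(BW)=G(W)$ is trivial (from $0w=0$ one deduces $w=1$ once $0$ is inverted), $BW$ is contractible.  Thus your $W$ realizes a point, not $\bigvee_i BM_i$; the simplices involving $0$ do not form a collapsible subcomplex off to the side---they absorb everything.

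The paper's replacement for the free product is Theorem~\ref{t:wedge}: if the minimal ideal $K$ of $M$ is a rectangular band, then the submonoid $(M\times\{1\})\cup(K\times N)$ of $M\times N$ is finite with classifying space $BM\vee BN$, and its minimal ideal $K\times J$ is again a rectangular band, so the construction iterates.  This structural hypothesis is why the paper takes the trouble to build each $M(C_k,2)$ explicitly (the semigroups $S_n$ of Section~4) as a monoid whose minimal ideal is a rectangular band, and why Sweeney's suspension $J(S)$, which also outputs a rectangular-band minimal ideal, slots in.  You correctly flag the construction of a finite \emph{monoid} model for $M(\mathbb Z/k,n)$ as delicate, but you have misplaced the main difficulty: it is the finite wedge, not the individual Moore pieces, that requires the new idea.
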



A natural strategy for proving Theorem~B is to first show the closure (up to homotopy equivalence) of the classifying spaces of finite monoids under suspension and wedge sum, and then to realize an $M(C,2)$ as the classifying space of a finite monoid for any cyclic group $C$.  Realizing suspensions has already been accomplished in~\cite{homotopysmall}, where an $M(C_2,2)$ classifying space of a finite monoid was also constructed.  Note that $S^2$ is homotopy equivalent to the classifying space of a finite monoid by~\cite{Fiedor}, and so we also have $M(\mathbb Z,2)$.  The key problem with this approach is that the natural way to realize a wedge sum of classifying spaces of monoids is via the free product construction, which immediately takes you out of the class of finite monoids.  

The key new contributions in this paper are the following.  We show that if $M$ and $N$ are finite monoids whose minimal ideals are rectangular bands, then the wedge sum $BM\vee BN$ is homotopy equivalent to the classifying space of a finite monoid whose minimal ideal is a rectangular band.  Sweney's construction of the suspension~\cite{homologicalepi} always produces a monoid whose minimal ideal is a rectangular band.  It then remains to construct a finite monoid whose minimal ideal is a rectangular band, and whose classifying space is an $M(C,2)$ for each cyclic group $C$, which we do.

We end the paper by answering some questions of Sweeney~\cite{homotopysmall} on the homology of bands and finite von Neumann regular monoids.   In particular, we completely answer Question~8.6 and answer most of Question~8.4 of~\cite{homotopysmall}.  We also determine when the algebra of a finite regular monoid over an arbitrary commutative ring has finite global dimension, generalizing the results of Nico~\cite{Nico1} over a field:

\begin{thmC}
Let $M$ be a finite von Neumann regular monoid and $K$ a commutative ring.  Then $KM$ has finite global dimension if and only if $K$ has finite global dimension and the order of every maximal subgroup of $M$ is a unit in $K$.
\end{thmC}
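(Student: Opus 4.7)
The plan is to reduce Theorem~C to the classical fact that for a finite group $G$ and commutative ring $K$, the group algebra $KG$ has finite global dimension if and only if $\gldim(K)<\infty$ and $|G|$ is a unit in $K$, by exploiting the $\mathscr J$-class structure of the finite regular monoid $M$.

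\textbf{Necessity.} Suppose $n:=\gldim(KM)<\infty$. The augmentation $\epsilon\colon KM\to K$ endows any $K$-module $V$ with a $KM$-module structure; a $KM$-projective resolution of $V$ of length at most $n$ is automatically a $K$-projective resolution, since $KM$ is $K$-free and restriction of scalars is exact, so $\gldim(K)\leq n$. To handle the maximal subgroups, I exhibit, for each idempotent $e\in M$, an idempotent $f\in KM$ such that $fKMf$ is Morita equivalent to $KG_e$; this gives $\gldim(KG_e)\leq\gldim(KM)$ and the group-algebra fact then yields $|G_e|\in K^{\times}$. For $e$ in the minimal ideal the construction is immediate, since the minimal ideal of a finite regular monoid is completely simple and so $eMe=G_e$, giving $eKMe=KG_e$. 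For higher $\mathscr J$-classes we construct $f$ as a signed $K$-linear combination of $e$ and idempotents strictly $\mathscr J$-below $e$ (a Rukolaine-type idempotent); its well-definedness and the required Morita equivalence both demand invertibility of $|G_{e'}|$ for $e'<_{\mathscr J}e$, so we argue by induction on the $\mathscr J$-order, the base case being provided by the minimal-ideal idempotents.

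\textbf{Sufficiency.} Suppose $\gldim(K)<\infty$ and every $|G_e|$ is a unit in $K$. Fix a linear extension $J_1<\cdots<J_r$ of the $\mathscr J$-order of $M$ and let $I_k\subseteq KM$ denote the two-sided ideal spanned by the elements of the classes $\leq J_k$, producing a filtration
\[
0=I_0\subset I_1\subset\cdots\subset I_r=KM,
\]
whose successive quotients $I_k/I_{k-1}$ are the contracted algebras of the principal factors $J_k^{\ast}$. Each $J_k^{\ast}$ is completely $0$-simple by regularity of $M$, and so its contracted algebra is a Munn matrix algebra over $KG_{e_k}$ twisted by a sandwich matrix $P_k$; invertibility of $|G_{e_k}|$ in $K$ lets one average $P_k$ to an invertible matrix, producing a Morita equivalence of $I_k/I_{k-1}$ with $KG_{e_k}$. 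By the group-algebra fact each $I_k/I_{k-1}$ has finite global dimension equal to $\gldim(K)$, and the long exact $\Ext$ sequences associated to $0\to I_{k-1}\to I_k\to I_k/I_{k-1}\to 0$, combined with the fact that each stratum is cut out by an idempotent of $KM$, propagate finite global dimension up the filtration to $\gldim(KM)<\infty$.

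\textbf{Main obstacle.} The essential technical step in both directions is the construction, for a general idempotent $e\in M$, of an idempotent $f\in KM$ realizing $KG_e$ as a Morita-equivalent corner of $KM$. This requires the invertibility of $|G_{e'}|$ for $e'$ strictly $\mathscr J$-below $e$, which is simultaneously supplied by the induction in the necessity direction and postulated globally in the sufficiency direction. Once these idempotent splittings are set up, both implications follow cleanly from the group-algebra fact and the standard homological algebra of the $\mathscr J$-class filtration.
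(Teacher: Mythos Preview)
Your sufficiency argument contains a genuine error.  You assert that when $|G_{e_k}|\in K^{\times}$ the contracted algebra of the principal factor $J_k^{\ast}$ (a Munn matrix algebra $\mathcal M^0(KG_{e_k};m,n;P_k)$) is Morita equivalent to $KG_{e_k}$, because one can ``average $P_k$ to an invertible matrix.''  That is false: invertibility of $|G_{e_k}|$ has nothing to do with invertibility of the sandwich matrix.  For the $2\times 2$ rectangular band the maximal subgroup is trivial, yet the sandwich matrix is $\bigl(\begin{smallmatrix}1&1\\1&1\end{smallmatrix}\bigr)$, and the resulting Munn algebra is a $4$-dimensional non-unital $K$-algebra that is certainly not Morita equivalent to $K$.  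In general $I_k/I_{k-1}$ need not even be unital, so speaking of its global dimension is already problematic, and your ``propagation up the filtration'' step is left entirely unspecified.  The filtration by $\mathscr J$-classes is the right object, but what one actually needs (and what you are missing) is that $I_k/I_{k-1}$ is \emph{projective as a left $KM/I_{k-1}$-module}, not that it has finite global dimension as an algebra.  This projectivity holds for any $0$-simple $0$-minimal ideal of a finite monoid: it decomposes as $\bigoplus_i (KM/I_{k-1})f_i$ over idempotent generators $f_i$ of the distinct minimal nonzero left ideals, with no hypothesis on $|G_{e_k}|$ whatsoever.

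Once that projectivity is in hand, the paper's route diverges from yours in a second way: it never builds Rukolaine idempotents.  Instead it uses the \emph{monoid} idempotents $e_j$ directly and the identification $e_j(K_{\mathbb 0}M/I_{j+1})e_j\cong KG_{e_j}$ (since $e_jJ_j^{\ast}e_j=G_{e_j}\cup\{0\}$).  Projectivity of $I_j/I_{j+1}$ over $K_{\mathbb 0}M/I_{j+1}$ feeds into a standard stratifying-ideal bound (if $ReR$ is projective then $\gldim eRe,\ \gldim R/ReR\leq \gldim R\leq \gldim eRe+\gldim R/ReR+2$), and iterating along the principal series gives both directions at once: $\gldim KG_{e_j}\leq \gldim K_{\mathbb 0}M\leq \sum_j\gldim KG_{e_j}+2(n-1)$.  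Your Rukolaine-idempotent strategy for necessity can be made to work, but it is more delicate than you indicate: you still owe the inequality $\gldim fKMf\leq \gldim KM$, which is \emph{not} automatic for an arbitrary idempotent $f$ and again comes down to projectivity of $KMfKM$.  The paper's approach sidesteps all of this.
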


Like Sweeney, we work with semigroups throughout, although there is no real difference between working with semigroups or monoids.

\section{Preliminaries}
This section considers the preliminaries we need from semigroup theory, topology and homological algebra.

\subsection{Semigroup theoretic preliminaries}
A \emph{semigroup} is a set $S$ with an associative binary operation.  We shall tacitly assume that $S$ is nonempty.  A \emph{monoid} is a semigroup with identity, and any semigroup $S$ can be turned into a monoid $S^{\mathbb 1}$ by adjoining an identity.  The reader is referred to~\cite{CP} for basic semigroup theory and the appendix of~\cite{qtheor} for specifics on finite semigroup theory.

A semigroup is a \emph{band} if each of its elements is idempotent.  A rectangular band is a band satisfying the identity $xyx=x$.  Up to isomorphism, every rectangular band is obtained by taking nonempty sets $A,B$ and endowing $A\times B$ with the product \[(a,b)(a',b') = (a,b')\] whence the name ``rectangular.''

An \emph{ideal} of a semigroup $S$ is a nonempty subset $I$ such that $SI\cup IS\subseteq I$.  The intersection of two ideals is an in ideal, and hence every finite semigroup contains a unique minimal ideal.  A semigroup is simple if it has no proper ideals.  A rectangular band is simple and the minimal ideal of a finite semigroup is simple.  The  Rees-Suschkewitsch describes all finite simple semigroups up to isomorphism.  

A semigroup $S$ may have a zero element $z$, that is $zs=z=sz$ for all elements $s\in S$.  In this case $\{z\}$ is the minimal ideal of $S$, and an ideal $I$ of $S$ is called \emph{$0$-minimal} if it is a minimal nonzero ideal.  Note that either $I$ is null, or $I$ is $0$-simple.   A semigroup $S$ with zero is \emph{null} if $S^2=0$ and is \emph{$0$-simple}  if it is not null and contains no proper nonzero ideal.  Rees's theorem describes all finite $0$-simple semigroups up to isomorphism. 
If $I$ is a $0$-minimal $0$-simple ideal of a finite semigroup $S$, then the nonzero principal left ideals of $I$ are minimal nonzero left ideals of $S$, and any two of them intersect in $0$.  Moreover, every principal left ideal of $I$ can be generated by an idempotent.  If $S$ is any semigroup, we can always adjoin a zero element to obtain a semigroup $S^{\mathbb 0}$ with zero.  
If $I$ is an ideal of a semigroup $S$, the Rees quotient $S/I$ is the quotient of $S$ by the congruence identifying all of the elements of $I$ (and nothing else).  Note that $S/I$ is a semigroup with zero element the class of $I$.

A \emph{principal series} for a semigroup $S$ is an unrefinable chain of two-sided ideals
\begin{equation}\label{eq:principal.series}    
I_n\subsetneq \cdots\subsetneq I_1\subsetneq I_0=S.
\end{equation}
In particular, $I_n$ must be the minimal ideal of $S$.  Note that every finite semigroup admits a principal series.  If $s\in I_j\setminus I_{j+1}$, then $I_j = S^{\mathbb 1}sS^{\mathbb 1}\cup I_{j+1}$, where we set $I_{n+1}=\emptyset$ for convenience. Moreover, $I_j/I_{j+1}$ is either $0$-simple or null.  

A semigroup $S$ is called (von Neumann) \emph{regular} if, for all $s\in S$, there exists $t\in S$ with $sts=s$.  A semigroup is regular if and only if each principal left (respectively, right) ideal can be generated by an idempotent.  A finite semigroup $S$ is regular if and only if each principal two-sided ideal can be generated by an idempotent.  Every band is a regular semigroup, as is every group.  Any $0$-simple or simple semigroup is regular.
Given a principal series \eqref{eq:principal.series} in a finite regular semigroup $S$, we can always find an idempotent $e_j\in S$ with $I_j = Se_jS\cup I_{j+1}$ and, in particular, $I_j/I_{j+1}$ is $0$-simple.

Every finite semigroup contains an idempotent.
If $e$ is an idempotent of $S$, then $eSe$ is a monoid with identity $e$.  The group of units of $eSe$ is denoted $G_e$ and is called the \emph{maximal subgroup} of $G$ at $e$.  If each maximal subgroup of $S$ is trivial, then $S$ is said to be \emph{aperiodic}. For example, every band is aperiodic.  In a finite semigroup, idempotents that generate the same principal ideal have isomorphic maximal subgroups.   If $I$ is a finite $0$-simple semigroup and $e$ is a nonzero idempotent, then $eIe=G_e\cup \{0\}$.

If $S$ is a semigroup, then the \emph{group completion} $G(S)$ of $S$ is the universal group receiving a homomorphism from $S$, i.e., the left adjoint of the forgetful functor from groups to semigroups.  One way to construct $G(S)$ is to take a presentation for $S$ as a semigroup and view it as a group presentation.  If $S$ is a finite semigroup, then $G(S)$ is a homomorphic image of $S$, and hence is finite.  In fact, $G(S)$ is a certain homomorphic image of the maximal subgroup of the minimal ideal of $S$; see~\cite{Arbib} or~\cite[Theorem~6.3]{homotopysmall} for details.

If $S$ is a semigroup and $K$ is a commutative ring (always assume to have a unit), then the \emph{semigroup algebra} $KS$ is the (not necessarily unital) $K$-algebra with basis $S$ and the unique bilinear multiplication extending the multiplication of $S$. If $S$ is a monoid, then $KS$ is unital, of course.

If $S$ is a semigroup with zero element $z$ and $K$ is a commutative ring, then the \emph{contracted semigroup algebra} of $S$ is $K_{\mathbb 0}S=KS/Kz$.   So $K_{\mathbb 0}S$ has basis $S\setminus \{z\}$ with product extending that of $S$ where we identify the zeroes of $S$ and its contracted algebra.  Notice that $KS\cong K_{\mathbb 0}[S^{\mathbb 0}]$.  One observes directly that if $I$ is an ideal of $S$, then $KS/KI\cong K_{\mathbb 0}[S/I]$.  Also, if $S$ has a zero, then $K_{\mathbb 0}S/K_{\mathbb 0}I\cong  K_{\mathbb 0}[S/I]$.

If $M$ is a monoid, then a left $M$-set is a set $X$ equipped with an action $M\times X\to X$, written $(m,x)\mapsto mx$ satisfying $1x=x$ and $m(m'x)=(mm')x$ for all $m,m'\in M$ and $x\in X$. Right $M$-sets are defined dually.  A map $f\colon X\to Y$ of $M$-sets is \emph{equivariant} if $f(mx)=mf(x)$ for all $m\in M$ and $x\in X$.  An $M$-set $X$ is \emph{free} if it has a basis, that is, there is a subset $B\subseteq X$ such that each $x\in X$ is uniquely of the form $x=mb$ with $m\in M$ and $b\in B$.  Free $M$-sets have the usual universal property.  An $M$-set $P$ is \emph{projective} if every equivariant surjective map $f\colon X\to P$ has an equivariant section.  It is well known~\cite{actsbook} that every projective $M$-set is isomorphic to one  of the form $\bigsqcup_{a\in A} Me_a$ where the $e_a$ are not necessarily distinct idempotents.

If $X$ is a right $M$-set and $Y$ is a left $M$-set, then $X\otimes_M Y = (X\times Y)/{\sim}$ where $\sim$ is the least equivalence relation so that $(xm,y)\sim (x,my)$ for all $x\in X$, $y\in Y$ and $m\in M$.  The equivalence class of $(x,y)$ is denoted $x\otimes y$.  If $X$ is an $N$-$M$-biset, i.e., has a left action of $N$ that commutes with the right action of $M$, then $X\otimes_M Y$ is a left $N$-set.  If $X$ is a free right $M$-set with basis $B$, then $X\otimes_M Y= \bigsqcup_{b\in B}b\otimes Y\cong \bigsqcup_{b\in B} Y$.  If $X=eM$ with $e$ an idempotent, then $eM\otimes_M Y\cong eY$ as a left $eMe$-set via the multiplication map.

The trivial left (respectively, right) $KS$-module has underlying $K$-module $K$, where each element of $S$ acts on $K$ as the identity.  Often, the trivial $\mathbb ZS$-module will be called the trivial module for $S$.

\subsection{Topological preliminaries}
The reader  is referred to~\cite{may} for background on simplicial sets and to~\cite{Hatcher} for $\Delta$-sets.  Associated to every semigroup $S$ is a $\Delta$-set whose set of $q$-simplices is $S^q$.  The face maps are given by 
\begin{align*}
d_0(s_1,\ldots,s_q) & = (s_2,\ldots,s_q),\\
d_i(s_1,\ldots, s_q) & =(s_1,s_2,\ldots,s_{i-1},s_is_{i+1}, s_{i+2},\ldots,s_q),\quad \text{for}\ 1\leq i\leq q-1,\\
d_q(s_1,\ldots, s_q) & = (s_1,\ldots, s_{q-1}).
\end{align*}
The geometric realization of this $\Delta$-set is denoted by $BS$ and is called the \emph{classifying space of $S$}.  We will often not distinguish between a $\Delta$-set and its geometric realization.   In the case that $S$ is a monoid, we can give this $\Delta$-set the structure of a simplicial set by having the $i^{th}$-degeneracy insert a $1$ at position $i$.  So the degenerate simplices are those simplices containing a $1$. Since the fat geometric realization of a simplicial set, i.e., the realization of its underlying $\Delta$-set, is homotopy equivalent to its geometric realization as a simplicial set, for our purposes it doesn't matter whether we view the classifying space of a monoid as coming from its $\Delta$-set or its simplicial set.  Note that if $S$ is a semigroup, then the geometric realization of the simplicial set associated to $S^{\mathbb 1}$ is precisely $BS$ and hence $BS\simeq BS^{\mathbb 1}$, as was observed in~\cite[Proposition~4.1]{homotopysmall}.  Note that if $M,N$ are monoids, at least one of which is countable, then $B(M\times N)\cong BM\times BN$ (otherwise one must take the product in the category of compactly generated spaces)~\cite[Proposition~4.3]{homotopysmall}.  An important fact for us is that $\pi_1(BS)\cong G(S)$~\cite[Lemma~1]{McDuff}.  

If $X,Y$ are pointed spaces, then the \emph{wedge sum} $X\vee Y$ is obtained from the disjoint union by identifying the base points.  For CW complexes, we assume that the basepoints are vertices so that we obtain a CW complex.  It is well known (cf.~\cite{Fiedor}) that if $M,N$ are monoids, then $B(M\ast N)\simeq BM\vee BN$, where $M\ast N$ is the free product, but the free product of nontrivial monoids is never finite.  

If $X$ is a topological space, then the \emph{suspension} of $X$ is the quotient of $X\times [0,1]$ obtained by identifying $X\times \{0\}$ to a point and $X\times \{1\}$ to a different point.  For example, $\Sigma S^n\cong S^{n+1}$, where $S^k$ is the $k$-sphere.  If $X$ is path connected, then $\Sigma X$ is simply connected and $H_{n+1}(\Sigma X)\cong H_n(X)$ for all $n\geq 1$. If $X$ is a CW complex, then $\Sigma X$ is naturally a CW complex. 

If $n\geq 2$ and $A$ is an abelian group, a Moore space of type $M(A,n)$ is a simply connected CW complex $X$ with $H_n(X)\cong A$ and $\til H_q(X)=0$ for $q\neq n$.  For example, the $n$-sphere $S^n$ is a Moore space of type $M(\mathbb Z,n)$.  It is well known that any two Moore spaces of type $M(A,n)$ are homotopy equivalent~\cite{Hatcher}.  Note that if $X$ is an $M(A,n)$, then $\Sigma X$ is an $M(A,n+1)$.  

We shall also need the machinery of equivariant classifying spaces from~\cite{TopFinite1}.  If $M$ is a monoid, a (left) $M$-space is a topological spaces $X$ with a continuous $M$-action $M\times X\to X$, where $M$ has the discrete topology.  Right $M$-spaces are defined dually.  Tensor products of $M$-spaces are given the quotient topology.

Let $B^n$ be the $n$-dimensional closed Euclidean ball.  A \emph{projective} $n$-cell is a space of the form $Me\times B^n$ where $e\in M$ is an idempotent, and $Me$ is given the discrete topology. This is an $M$-space with action $m(x,t)=(mx,t)$ for $x\in Me$ and $t\in T$.  An $M$-CW complex $X$ is an inductive limit of $M$-spaces $X^k$ where $X^{k+1}$ is obtained form $X^k$ by attaching a collection $\bigsqcup_{a\in A} Me_a\times B^{k+1}$ of projective $M$-cells via an $M$-equivariant continuous mapping $\bigsqcup_{a\in A}Me_a\times S^k\to X^k$.  A contractible projective $M$-CW complex is called an \emph{equivariant classifying space} for $M$, and they are unique up to $M$-homotopy equivalence.  If $X$ is an equivariant classifying space for $M$, then the augmented cellular chain complex of $X$ provides a projective resolution of the trivial $\mathbb ZM$-module.  

If $X$ is a projective left $M$-CW complex and $A$ is an $N$-$M$-biset that is projective as a left $N$-set, then $A\otimes_M X$ is a projective $N$-CW complex~\cite[Corollary~3.2]{TopFinite1}.  In particular, taking $N$ to be the trivial monoid, if $X$ is a projective $M$-CW complex, then $M\backslash X:=\{1\}\otimes_M M$ is a CW complex.  It is shown in~\cite[Corollary~6.7]{TopFinite1} that if $X$ is any equivariant classifying space for $M$, then $M\backslash X\simeq BM$.  Note that there is an equivariant classifying space $EM$ such that $M\backslash EM=BM$.

\subsection{Homological preliminaries}
Let $R$ be a ring with unit.  We shall work with left modules as the default, but, of course, there are dual notions for right modules.  The \emph{projective dimension} $\pdim M$ of an $R$-module $M$ is the length of the shortest projective resolution of $M$, which might be infinite.  The (left) \emph{global dimension} of $R$ is the supremum of the projective dimensions of all (left) $R$-modules.  

We shall use throughout the following well-known fact, cf.~\cite[Lemma VIII.2.1]{Browncohomology}.

\begin{Lemma}\label{l:brown}
Let $M$ be an $R$-module.  Then the following are equivalent:
\begin{enumerate}
    \item $\pdim M\leq n$;
    \item $\Ext_R^i(M,-)=0$ for all $i>n$;
    \item $\Ext^{n+1}(M,-)=0$;
    \item If $0\to K\to P_{n-1}\to \cdots P_0\to M\to 0$ is an exact sequence of $R$-modules with each $P_i$ projective, then $K$ is projective.
\end{enumerate}
\end{Lemma}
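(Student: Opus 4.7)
The plan is to prove the equivalence via the cycle $(1)\Rightarrow(2)\Rightarrow(3)\Rightarrow(4)\Rightarrow(1)$, with the heart of the argument being a standard dimension-shifting manipulation of long exact Ext sequences.

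For $(1)\Rightarrow(2)$, I would choose a projective resolution $0\to P_n\to P_{n-1}\to\cdots\to P_0\to M\to 0$ of length at most $n$, which exists by the definition of $\pdim M\leq n$. Since $\Ext_R^i(M,N)$ may be computed as the $i$-th cohomology of $\Hom_R(P_\bullet,N)$, and since $P_i=0$ for $i>n$, this cohomology vanishes for every $N$ and every $i>n$. The implication $(2)\Rightarrow(3)$ is immediate.

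For $(3)\Rightarrow(4)$, given an exact sequence $0\to K\to P_{n-1}\to\cdots\to P_0\to M\to 0$ with each $P_i$ projective, I would break it into short exact sequences $0\to K_i\to P_i\to K_{i-1}\to 0$, where $K_{-1}=M$, $K_{n-1}=K$, and $K_{i-1}=\ker(P_{i-1}\to P_{i-2})$ for $0\leq i\leq n-1$. The long exact sequence of $\Ext_R^*(-,N)$ together with the vanishing of $\Ext_R^j(P_i,N)$ for $j\geq 1$ yields isomorphisms $\Ext_R^{j+1}(K_{i-1},N)\cong \Ext_R^j(K_i,N)$ for all $j\geq 1$. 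Iterating these $n$ times gives $\Ext_R^1(K,N)\cong \Ext_R^{n+1}(M,N)=0$ for every $R$-module $N$, from which it follows that $K$ is projective.

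Finally, for $(4)\Rightarrow(1)$, I would pick any projective resolution $\cdots\to P_{n-1}\to\cdots\to P_0\to M\to 0$ (possible since the category of $R$-modules has enough projectives), let $K$ be the kernel of $P_{n-1}\to P_{n-2}$ (or $M$ itself if $n=0$), and apply $(4)$ to obtain that $K$ is projective. Splicing this in yields a projective resolution of $M$ of length at most $n$, so $\pdim M\leq n$. I expect the main (minor) obstacle to be keeping the indices straight in the dimension-shifting step, and handling the edge case $n=0$ cleanly; everything else is formal.
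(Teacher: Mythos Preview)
Your argument is correct and is exactly the standard dimension-shifting proof of this classical characterization of projective dimension. The paper does not actually supply a proof of this lemma; it is stated as a well-known fact with a reference to Brown's \emph{Cohomology of Groups}, so there is nothing further to compare.
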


If $\p\colon R\to S$ is a homomorphism of unital rings, then any $S$-module $A$ can be viewed as an $R$-module, which we call the \emph{inflation} of $A$ along $\p$.

If $M$ is a monoid, then the projective dimension of the (left/right) trivial module $\mathbb Z$ is often called the (left/right) \emph{cohomological dimension} of $M$.

 If $A$ is a left $\mathbb ZM$-module, then the homology of $M$ with coefficients in $A$ is $H_n(M,A) = \Tor_n^{\mathbb ZM}(\mathbb Z,A)$ and the cohomology of $M$ with coefficients in $A$ is $H^n(M,A) = \Ext^n_{\mathbb ZM}(\mathbb Z,A)$, where $\mathbb Z$ has the trivial module structure.  If $B$ is a right $\mathbb ZM$-module, we may also write $H_n(M,B)$ for $\Tor_n^{\mathbb ZM}(B,\mathbb Z)$. Hopefully, no confusion will arise.   Note that any abelian group $A$ can be viewed as a left/right trivial $\mathbb ZM$-module, i.e., one where elements of $M$ act as the identity on the appropriate side.    We put $H_n(M)=H_n(M,\mathbb Z)$ and $H^n(M)=H^n(M,\mathbb Z)$ where $\mathbb Z$ has the trivial module structure.

Of primary importance to us is the following classical isomorphism, cf.~\cite{GabrielZisman}.  See also~\cite[Theorem~A.1]{homotopysmall}.

\begin{Thm}\label{t:top.interp.homology}
Let $M$ be a monoid and $A$ any $\mathbb Z\pi_1(BM)$-module, inflated to a $\mathbb ZM$-module.  Then $H_n(BM,A)\cong H_n(M,A)$.  This applies, in particular, to any abelian group $A$ with the trivial $\mathbb ZM$-module structure.
\end{Thm}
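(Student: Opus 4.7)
The plan is to use the equivariant classifying space $EM$ recalled in the preliminaries as a bridge between the algebraic side $\Tor^{\mathbb ZM}(\mathbb Z,A)$ and the topological side $H_*(BM,A)$. By construction $C_*(EM)\to\mathbb Z$ is a projective $\mathbb ZM$-resolution of the trivial module and $M\backslash EM = BM$, so
\begin{equation*}
H_n(M,A) \;=\; \Tor^{\mathbb ZM}_n(\mathbb Z,A) \;\cong\; H_n\bigl(A\otimes_{\mathbb ZM} C_*(EM)\bigr)
\end{equation*}
(with the usual left/right conventions, which one can handle by taking an equivariant classifying space of the appropriate handedness).

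Next, form $Y := G(M)\otimes_M EM$, which by the result of the preliminaries on projectivity of tensor products is a projective $G(M)$-CW complex. Since every idempotent of a group is trivial, each projective $G(M)$-cell $G(M)\otimes_M Me \times B^n$ collapses to a free $G(M)$-cell $G(M)\times B^n$, so $Y$ is a free $G(M)$-CW complex. In particular, $Y\to G(M)\backslash Y = M\backslash EM = BM$ is a regular $G(M)$-cover, and its cellular chain complex
\begin{equation*}
C_*(Y) \;=\; \mathbb Z G(M)\otimes_{\mathbb ZM} C_*(EM)
\end{equation*}
is a complex of free $\mathbb Z G(M)$-modules. Because $A$ is obtained by inflation along the group completion $M\to G(M)$, change of rings yields
\begin{equation*}
A\otimes_{\mathbb Z G(M)} C_*(Y) \;\cong\; A\otimes_{\mathbb ZM} C_*(EM),
\end{equation*}
so the right side already computes $H_*(M,A)$ by the first paragraph.

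The main obstacle is the remaining identification $H_n\bigl(A\otimes_{\mathbb Z G(M)} C_*(Y)\bigr) \cong H_n(BM,A)$ in the sense of local coefficients. The natural route is to argue that $Y$ is the universal cover of $BM$: it is already a regular $G(M)$-cover, and $\pi_1(BM)\cong G(M)$ by McDuff's theorem, so one has to check that the induced surjection $\pi_1(BM)\to G(M)$ is the identity, equivalently that $Y$ is simply connected. This should follow from an equivariant Whitehead-type argument exploiting the contractibility of $EM$, together with the universal property of $G(M)$. Once the homotopy equivalence $Y\simeq\til{BM}$ is established, $C_*(Y)$ is $\mathbb Z G(M)$-equivariantly chain homotopy equivalent to $C_*(\til{BM})$, and the standard definition of homology with local coefficients closes the argument. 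Should simple connectivity prove stubborn, one can instead directly compare $C_*(\til{BM})$ and $C_*(Y)$ by the method of acyclic models in the category of free $\mathbb Z G(M)$-complexes augmented to $\mathbb Z$, producing the required equivariant quasi-isomorphism without passing through a topological argument.
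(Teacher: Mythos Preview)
The paper does not give its own proof of this statement: it is quoted as classical, with references to Gabriel--Zisman and to~\cite[Theorem~A.1]{homotopysmall}. So there is no in-paper argument to compare yours against.

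Your outline is sound through the change-of-rings step $A\otimes_{\mathbb ZG(M)}C_*(Y)\cong A\otimes_{\mathbb ZM}C_*(EM)$, and you are right that the remaining identification with $H_*(BM;A)$ is the crux. But neither proposed fix closes it. For the first: showing $Y$ is simply connected amounts to showing that the specific surjection $\pi_1(BM)\twoheadrightarrow G(M)$ coming from the cover $Y\to BM$ is injective, and merely knowing $\pi_1(BM)\cong G(M)$ abstractly (McDuff) does not force this when $G(M)$ is non-Hopfian; ``an equivariant Whitehead-type argument'' is a placeholder, not a proof. For the second: the fundamental lemma of homological algebra (which is what your appeal to ``acyclic models'' would cash out to here) produces an equivalence between two complexes of projectives only when at least one of them is a \emph{resolution} of the target module, but neither $C_*(\widetilde{BM})$ nor $C_*(Y)$ is acyclic in positive degrees in general---for instance if $BM\simeq S^2$ then $\widetilde{BM}=S^2$ with $H_2\cong\mathbb Z$---so there is nothing forcing a $\mathbb ZG(M)$-equivariant quasi-isomorphism between them. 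The standard argument (as in the cited references) bypasses the universal cover: one writes down the local-coefficient chain complex of the reduced simplicial set $BM$ directly, with differentials twisted by the $\pi_1$-action along the initial edge of each simplex, and observes that for the nerve of $M$ this is precisely the bar complex computing $\Tor^{\mathbb ZM}_*(\mathbb Z,A)$.
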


\section{Suspensions and wedge sums}
It was shown in~\cite[Corollary~9.4]{homotopysmall} that if $S$ is a monoid, then there is a monoid $J(S)$ with $BJ(S)\simeq \Sigma (BS)$ using Brown-Forman Discrete Morse theory.  The monoid $J(S)$ is $S\cup K$ where $K$ is the rectangular band $\{1,2\}\times S$, which is the minimal ideal of $J(S)$.  For $s\in S$, $s'\in S$ one has $s(i,s') = (i,s')$ and $(i,s')s=(i,s's)$.  In particular, if the homotopy type of a CW complex $X$ can be realized by a finite semigroup, then the homotopy type of $\Sigma X$ can be realized by a finite monoid whose minimal ideal is a rectangular band.  We will only need to apply this construction to the case where $BS$ is a Moore space of type $M(A,n)$ with $n\geq 2$ and $A$ is a finitely generated abelian group.  In this case $\Sigma(BS)$ will be a Moore space of type $M(A,n+1)$, and to prove that $BJ(S)$ is homotopy equivalent to $\Sigma(BS)$, we just need that it has the same homology as $\Sigma(BS)$.  We shall give an algebraic proof of this fact in Section~\ref{s:globdim}, below, in order to be self-contained.

It is well known, cf.~\cite{Fiedor}, that if $M$ and $N$ are monoids, then $B(M\ast N)\simeq BM\vee BN$.  But, unfortunately, the free product of nontrivial monoids is never finite.  We now show that if $X$ and $Y$ are simply connected CW complexes whose homotopy types can be realized by finite monoids with rectangular bands as minimal ideals, then $X\vee Y\simeq BS$ where $S$ is a finite monoid whose minimal ideal is a rectangular band.  We prove the following more precise result.

\begin{Thm}\label{t:wedge}
Let $M$ be a monoid whose minimal ideal $K$ is a rectangular band and let $N$ be any monoid.  Let  $S$ be the submonoid $(M\times \{1\})\cup (K\times N)$ of $M\times N$.  Then $BS\simeq BM\vee BN$.   Moreover, if $N$ has minimal ideal $J$, then $S$ has minimal ideal $K\times J$, and $S$ is finite if $M$ and $N$ are finite.
\end{Thm}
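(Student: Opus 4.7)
The plan is to identify sub-$\Delta$-complexes of $BS$ realizing $BM$ and $BN$, with contractible intersection, and then show their union is homotopy equivalent to $BS$, yielding the wedge decomposition.

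First, $S$ is easily seen to be a submonoid of $M\times N$: since $K$ is a two-sided ideal of $M$, the products $(m,1)(k,n)=(mk,n)$ and $(k,n)(m,1)=(km,n)$ lie in $K\times N$, while products within $M\times\{1\}$ or $K\times N$ stay in place. The ``moreover'' claims are routine: $K\times J$ is a minimal two-sided ideal of $S$ by minimality of $K$ in $M$ and $J$ in $N$, and $S\subseteq M\times N$ inherits finiteness.

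Next, the monoid homomorphism $M\to S$, $m\mapsto(m,1)$, gives a sub-$\Delta$-complex $BM\hookrightarrow BS$, and the subsemigroup $K\times N\subseteq S$ gives another sub-$\Delta$-complex $B(K\times N)\hookrightarrow BS$. Since $K$ is a rectangular band, $BK$ is contractible (a standard fact, provable by prepending a fixed element as a simplicial contracting homotopy, or by decomposing $K$ as a product of left- and right-zero semigroups each with contractible classifying space). Hence $B(K\times N)\simeq BK\times BN\simeq BN$. Their intersection in $BS$ consists of simplices with all coordinates in $K\times\{1\}\cong K$, namely $BK$, also contractible. By the standard gluing-along-contractible-intersection principle for CW pairs, $BM\cup B(K\times N)\simeq BM\vee BN$.

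The main obstacle is proving the inclusion $BM\cup B(K\times N)\hookrightarrow BS$ is a homotopy equivalence. The simplices of $BS$ missing from this union are the ``mixed'' ones $(s_1,\ldots,s_q)$ with some components in $(M\setminus K)\times\{1\}$ and others in $K\times(N\setminus\{1\})$. I would approach this via discrete Morse theory (Brown-Forman), as in Sweeney's suspension construction~\cite{homologicalepi,homotopysmall}: construct an acyclic matching on the mixed simplices using a fixed idempotent $e=(k_0,1)\in S$ with $k_0\in K$, pairing simplices that differ by insertion or deletion of $e$ at a canonical position, for instance the first type-change. The rectangular band identity $xyx=x$ in $K$ should supply the multiplicative control needed to verify acyclicity and to confirm that the unmatched cells are precisely those of $BM\cup B(K\times N)$. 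As an alternative, one could use the equivariant classifying space machinery of~\cite{TopFinite1}, building an $ES$ as a pushout of projective $S$-CW complexes derived from $EM$ and $EN$ glued along a contractible $S$-CW piece associated to $K$, and compute the quotient $S\backslash ES\simeq BM\vee BN$ directly. Either strategy crucially exploits the rectangular band structure of $K$.
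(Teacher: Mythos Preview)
Your outline is sound up to the point you yourself flag: the subcomplexes $BM$ and $B(K\times N)$ of $BS$ intersect in the contractible $BK$, and the union is homotopy equivalent to $BM\vee BN$. But the crucial inclusion $BM\cup B(K\times N)\hookrightarrow BS$ is left as a plan, and the specific matching you propose does not work as written. Write $K=A\times B$ and $e=(k_0,1)$ with $k_0=(a_0,b_0)$. If $e$ is inserted immediately before an entry $s_{i+1}=((a,b),n)\in K\times N$, then $e\cdot s_{i+1}=((a_0,b),n)$, which equals $s_{i+1}$ only when $a=a_0$; if $e$ is inserted immediately after $s_i=(m,1)$ with $m\notin K$, then $s_i\cdot e=(mk_0,1)\in K\times\{1\}\neq s_i$. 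Thus neither adjacent face of the augmented simplex recovers the original, and you do not even have a face pairing, let alone an acyclic one. The rectangular-band identity $xyx=x$ does not by itself give the one-sided absorption you need here. Producing a correct acyclic matching on the mixed simplices and verifying acyclicity is the whole content of the proof, not a routine verification.

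The paper takes precisely your second alternative. It fixes equivariant classifying spaces $X$ for $M$ with $X^0\cong Me_{a_0}$ and $Y$ for $N$ with $Y^0\cong N$ as in~\cite{TopFinite1}, observes that $S$ is projective as a right $M$-set (since $K=\bigsqcup_a e_aM$) and that $S(e_{a_0},1)\cong A\times\{b_0\}\times N$ is free as a right $N$-set, and forms the projective $S$-CW complexes $S\otimes_M X$ and $S(e_{a_0},1)\otimes_N Y$. These are glued along a cylinder $S(e_{a_0},1)\times I$ to produce a projective $S$-CW complex $W$, which is shown to be contractible by collapsing each of the contractible pieces $X$, $e_aX$, $Y$ to points and recognizing the remaining $1$-skeleton as a rooted tree. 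Then $S\backslash W\simeq BS$ by~\cite[Corollary~6.7]{TopFinite1}, and since tensor product commutes with colimits, $S\backslash W$ is the homotopy pushout of $M\backslash X\simeq BM$ and $N\backslash Y\simeq BN$ along their basepoints, i.e., $BM\vee BN$. This route never touches the mixed simplices of $BS$ at all.
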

\begin{proof}
Let us assume that $K=A\times B$.  Fix $b_0\in B$ and put $e_a=(a,b_0)$ for $a\in A$.  Fix also $a_0\in A$.  We identify $M$ with the submonoid $M\times \{1\}$ of $S$.   By~\cite[Lemma~6.4]{TopFinite1} we can choose an equivariant classifying space $X$ for $M$ such that $X^0\cong Me_{a_0}=A\times \{b_0\}$ and an equivariant classifying space $Y$ for $N$ such that $Y^0\cong N$.   Note that $S(e_{a_0},1) = A\times \{b_0\}\times N$ is a free right $N$-set with basis $\{(a,b_0,1): a\in A\}$.  Thus $S(e_{a_0},1)\otimes_N Y$ is a projective $S$-CW complex~\cite{TopFinite1}, and $S(e_{a_0},1)\otimes_N Y =\bigsqcup_{a\in A}(e_a,1)\otimes Y\cong \bigsqcup_{a\in A} Y$.  Note that the vertex set is $\{(e_a,1)\otimes n: a\in A,n\in N\}$, which is in bijection with $A\times N$, and is isomorphic $S(e_{a_0},1)$ as a left $S$-set.

Next we observe that $S$ is a projective right $M$-set.  Indeed, we have $S=M\sqcup (K\times N\setminus \{1\})$ where $M$ acts trivially $N\setminus\{1\}$.  But $K$ is a projective right $M$-set because $K=\bigsqcup_{a\in A}e_aM$.  Thus $S\otimes_M X$ is a projective $S$-CW complex.  Moreover, \[S\otimes_M X = (1\otimes X)\sqcup \bigsqcup_{(a,n)\in A\times N\setminus\{1\}} (e_a,n)\otimes e_aX\cong X\sqcup ((N\setminus \{1\})\times \bigsqcup_{a\in A} e_aX)\] where $N\setminus \{1\}$ has the discrete topology.   The vertex set consists of $1\otimes Me_{a_0}$ and the elements $(e_a,n)\otimes e_a$ with $a\in A$ and $n\in N\setminus \{1\}$.  This is in bijection with $A\times N$ and is isomorphic to $S(e_{a_0},1)$ as a left $S$-set by sending $(a,b_0,n)$ to $1\otimes e_a$ if $n=1$, and to $(e_a,n)\otimes e_a$ when $n\neq 1$.

It follows that $Z'=(S\otimes_M X)\sqcup (S(e_{a_0},1)\otimes_N Y)$ is a projective $S$-CW complex.   Also, $Z= S(e_{a_0},1)\times I$ is a projective $S$-CW complex of dimension $1$, where $I$ is the unit interval, with vertex set $(S(e_{a_0},1)\times \{0\})\sqcup (S(e_{a_0},1)\times \{1\})$.  By~\cite{TopFinite1}, $W=Z\bigsqcup_{Z^0} Z'$ is a projective $S$-CW complex where we equivariantly glue in $Z^0$ as follows.  Our mapping will take $S(e_{a_0},1)\times \{0\}$ to $S\otimes_M X$ and  $S(e_{a_0},1)\times \{1\}$ to  $S(e_{a_0},1)\otimes_N Y$. To do this, we send $((e_a,n),0)$ to  $1\otimes e_a$ if $n=1$, and to $(e_a,n)\otimes e_a$ if $n\neq 1$. Also, we send $((e_a,n),1)$ to $(e_a,1)\otimes n$.  We claim that $W$ is contractible.  Notice that $e_aX$ is a retract of the contractible space $X$, and hence is contractible.  We contract $1\otimes X$ to a single vertex that we call $\ast$, we contract $(e_a,n)\otimes e_aX$, where $n\neq 1$, to a vertex we denote $v_{a,n}$ and we contract  $(e_a,1)\otimes Y$ to a vertex we denote $w_a$ without changing the homotopy type of $W$.  What remains is a graph where there is an edge from $\ast$ to $w_a$ for all $a\in A$ and an edge from $v_{a,n}$ to $w_a$ for each $a\in A$ and $n\neq 1$.  This graph is evidently a rooted tree with root $\ast$, where $\ast$ has $|A|$ children and each child has $|N\setminus \{1\}|$ children.   We conclude that $W$ is contractible.


Now $S\backslash Z\cong \{1\}\otimes_S Z$, where $\{1\}$ will always denote a singleton set with trivial action.  Since tensor product commutes with colimits, $S\backslash W$ is the result of gluing $\{1\}\otimes_S (S(e_{a_0},1)\times I)\cong I$ into
\begin{align*}
(\{1\}\otimes_S S\otimes _M X)\sqcup  (\{1\}\otimes S(e_{a_0},1)\otimes_N Y) & \cong (\{1\}\otimes_M X)\sqcup (\{1\}\otimes_N Y)\\ &\cong M\backslash X\sqcup N\backslash Y
\end{align*} with the edge connecting the unique vertices of $M\backslash X$ and $N\backslash Y$.  Thus $S\backslash W$ is the homotopy pushout of $M\backslash X$ and $N\backslash Y$ along their base points.  Now $M\backslash X$ is homotopy equivalent to $BM$ and $N\backslash Y$ to $BN$ by~\cite[Corollary~6.7]{TopFinite1}.  Moreover, these homotopy equivalences may be assumed to be cellular, and hence preserve base points as each of these complexes has a single vertex.  Therefore $S\backslash W$ is homotopy equivalent to the homotopy pushout of $BM$ and $BN$ along their basepoints.  But this is homotopy equivalent to $BM\vee BN$ by contracting the edge between the base points.

Trivially, $S$ is finite if $M$ and $N$ are finite.  If $N$ has minimal ideal $J$, then $K\times J$ is the minimal ideal of $S$ by construction.  This completes the proof.
\end{proof}

It follows using~\cite[Corollary~9.4]{homotopysmall} (or Proposition~\ref{p:susp.homology} below) and Theorem~\ref{t:wedge}, that if we can construct for each cyclic group $C$ a semigroup $S_C$ whose minimal ideal is a rectangular band and with $BS_C$ a Moore space of type $M(C,2)$, then every finite wedge of Moore spaces of finitely generated abelian groups is homotopy equivalent to the classifying space of a finite semigroup. 

The results of~\cite{Fiedor} show that the $2\times 2$ rectangular band has classifying space $S^2$, which is an $M(\mathbb Z,2)$.  Sweney~\cite{homotopysmall} constructs a finite semigroup with minimal ideal a rectangular band that is an $M(\mathbb C_2,2)$, where $C_n$ denotes the cyclic group of order $n$.  It remains to construct a finite semigroup with minimal ideal a rectangular band that is an $M(C_n,2)$ with $n\geq 3$, which we proceed to do.

\section{The semigroup $S_n$ yielding an $M(C_n,2)$} 
Because the crux of the proof relies on the existence of a semigroup $S_C$ for each cyclic group $C$ as described in the previous section, we proceed constructively: we describe the semigroup and compute its homology by hand to demonstrate that it does, indeed, yield a Moore Space $M(C,2)$.

Begin by fixing $n$ and let $K_n$ be the $(n+2)$-by-$n$ rectangular band \[K_n=\{(i,j):i\in\{x,y,0,...,n-1\},j\in\{0,...,n-1\}\}\] with the operation $(i,j)(k,l)=(i,l)$.

We view the cyclic group $C_n$ of order $n$ as integers modulo $n$, and so throughout this section one should always understand all integers as being taken modulo $n$.
Next, put $T_n=C_n\times\{s,t\}$, with the operation $(k,z)(l,w)=(k+l,w)$. Now we define our semigroup $S_n = K_n\sqcup T_n$ with the following multiplication:

For $(i,j)\in K_n$ and $(k,z)\in T_n$, we define $(i,j)(k,z)=(i,k+j)$.

On the other hand, for multiplications  $T_n\cdot K_n$, we put $(k,z)(x,i)=(x,i)$, $(k,z)(l,i)=(k+l,i)$, where $l\in\{0,\ldots,n-1\}$,  $(k,s)(y,i)=(k-1,i)$, and $(k,t)(y,i)=(k,i)$.

Note that in what follows  $\mathbb ZK_n$ is a projective $\mathbb ZM_n$-module, as it is $\bigoplus_{i=0}^{n-1}\mathbb ZM_n(x,i)$.

\begin{Thm}\label{t:moore.cyclic}
    The classifying space of $M_n=S_n^\mathbb{1}$ is a Moore space $M(C_n,2)$.
\end{Thm}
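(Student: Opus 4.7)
My plan proceeds in two stages: first I would establish simple connectivity of $BM_n$, and then compute its integral homology via an explicit algebraic projective resolution.

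Simple connectivity is immediate from the semigroup structure of $M_n$. The minimal ideal is the rectangular band $K_n$, so every maximal subgroup of $M_n$ supported on the minimal ideal is trivial. Since $G(M_n)$ is a quotient of such a maximal subgroup, $G(M_n) = 1$, and therefore $\pi_1(BM_n)\cong G(M_n) = 1$. In particular $H_0(BM_n) = \mathbb Z$ and $H_1(BM_n) = 0$.

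For the higher homology, Theorem~\ref{t:top.interp.homology} reduces the problem to computing $H_k(M_n,\mathbb Z) = \Tor_k^{\mathbb Z M_n}(\mathbb Z,\mathbb Z)$ and showing it is $C_n$ for $k = 2$ and zero for $k\geq 3$. The plan is to construct an explicit finite projective resolution $P_\bullet \to \mathbb Z$ of the trivial $\mathbb Z M_n$-module, starting with $P_0 = \mathbb Z M_n$ and the augmentation. The key building block for the higher terms is the observation recorded just before the theorem: $\mathbb Z K_n \cong \bigoplus_{j=0}^{n-1}\mathbb Z M_n(x,j)$ is projective. Each summand $\mathbb Z M_n(x,j)$, being of the form $\mathbb Z M_n e$ for an idempotent $e$, collapses to a single copy of $\mathbb Z$ after applying $\mathbb Z \otimes_{\mathbb Z M_n} -$, so a small number of such summands placed in the upper degrees should produce, after tensoring, exactly the two-term integer complex $\mathbb Z \xrightarrow{n} \mathbb Z$ that computes the homology of an $M(C_n,2)$.

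The cyclic group $C_n$ in $H_2$ is expected to arise from a norm-like element $\sum_{k=0}^{n-1}(k,t)\in\mathbb Z T_n$, in analogy with the classical periodic resolution of $\mathbb Z$ over $\mathbb Z C_n$. Crucial to making this work are the asymmetric rules $(k,s)(y,i) = (k-1,i)$ and $(k,t)(y,i) = (k,i)$, which appear to be designed so that the ``multiplication by $n$'' relation emerges in the right place; the identity element $(x,i)$, the ``shifted'' element $(y,i)$, and the numerical first coordinates each play different roles in the differentials.

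The main obstacle is the explicit construction and verification of the resolution: defining the $P_i$ together with differentials, checking $d^2 = 0$, proving exactness, and tracking how $\mathbb Z \otimes_{\mathbb Z M_n} -$ acts on each stage. Given the intricate multiplication of $S_n$, which treats the $x$-row, the $y$-row, and the numerical rows of $K_n$ quite differently, this requires careful bookkeeping to ensure that the tensored complex reduces to $\cdots \to 0 \to \mathbb Z \xrightarrow{n} \mathbb Z \to 0$ in the appropriate degrees, yielding $H_2 = C_n$ and $H_k = 0$ for $k\geq 3$, as required to identify $BM_n$ with $M(C_n,2)$.
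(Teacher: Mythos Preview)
Your overall strategy---prove simple connectivity via $G(M_n)=1$, then build a short explicit projective resolution of the trivial module and tensor down---is exactly the paper's approach.  But what you have written is a plan, not a proof: the entire content of the theorem is the construction and verification of the resolution, and you explicitly flag that as ``the main obstacle'' without carrying it out.  As it stands there is nothing to check.

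A few of your guesses about the shape of the resolution are also off and would send you in the wrong direction if you pursued them.  The paper does not start with $P_0=\mathbb Z M_n$ but with the smaller projective $\mathbb Z M_n(x,0)$; the next term is $\mathbb Z M_n$, then $\mathbb Z M_n(0,t)\oplus \mathbb Z K_n$, then $\mathbb Z K_n$, and the resolution stops there.  The key generator in degree~$2$ is not a norm element $\sum_k (k,t)$ but the difference $(0,t)-(1,s)$, which generates $\ker\varphi$ modulo $\mathbb Z K_n$ precisely because of the asymmetric rules $(k,s)(y,0)=(k-1,0)$, $(k,t)(y,0)=(k,0)$.  After tensoring with $\mathbb Z$ the complex does \emph{not} collapse to $\mathbb Z\xrightarrow{n}\mathbb Z$; it becomes
\[
0\longleftarrow \mathbb Z\xleftarrow{0}\mathbb Z\xleftarrow{A}\mathbb Z^{n+1}\xleftarrow{B}\mathbb Z^n\longleftarrow 0,
\]
and the factor $n$ only appears after a row-reduction of $B$ (summing the rows gives $(n,0,\ldots,0)$).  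So the computation of $H_2\cong C_n$ is a genuine linear-algebra step, not an immediate reading-off.  To complete the argument you need to actually write down the maps, verify exactness at each stage (in particular identify $\ker\varphi$ and $\ker\psi$ explicitly), and then do this matrix computation.
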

\begin{proof}
We use Theorem~\ref{t:top.interp.homology} to compute the homology of the classifying space.
We begin by obtaining a projective resolution of the trivial module $\mathbb{Z}$ for $M_n$. Consider the augmentation map $\mathbb{Z}M_n(x,0)\xrightarrow\varepsilon \mathbb{Z}\to 0$. Notice that $(k,t)[(y,0)-(x,0)]=(k,0)-(x,0)$, so $\ker{\varepsilon}$ is generated by $(y,0)-(x,0)$. Since our goal is to obtain an exact sequence, we define a map $\mathbb{Z}M_n \xrightarrow{\varphi} \mathbb{Z}M_n(x,0)$ with $\varphi(\sum_i k_im_i)=\sum_i k_im_i[(y,0)-(x,0)]$, i.e., $\varphi$ is right multiplication by $(y,0)-(x,0)$, so that $\mathop{\mathrm{im}}\varphi=\ker\varepsilon$. 

Next, $\ker\varphi \supseteq K_n$, since $(i,j)[(y,0)-(x,0)]=(i,0)-(i,0)=0$ for any $(i,j)\in K_n$. Also, $(k,t)-(k+1,s)\in \ker\varphi$ for all $k\in C_n$, since \[[(k,t)-(k+1,s)][(y,0)-(x,0)]=(k,0)-(k,0)+(x,0)-(x,0)=0.\] Since all of $T_n$ fixes $(x,0)$ and, for a fixed $k$, $(k,t)$ and $(k+1,s)$ are the only elements of $T_n$ with $(k,t)(y,0)=(k+1,s)(y,0)$, $\mathbb{Z}K_n$ and the $(k,t)-(k+1,s)$ generate $\ker\varphi$. Then see that $(0,t)-(1,s)$ in turn generates $\{(k,t)-(k+1,s): k\in C_n\}$ as a $\mathbb{Z}M_n$-module. Finally, observe that $(0,t)$ fixes the generator $(0,t)-(1,s)$.

So, set the next module in our resolution to \[\mathbb{Z}M_n(0,t)\oplus \mathbb{Z}K_n =\mathbb{Z}M_n(0,t)\oplus \bigoplus_{i=0}^{n-1} \mathbb{Z}M_n(x,i),\] and equip it with the map $\mathbb{Z}M_n(0,t)\oplus \mathbb{Z}K_n\xrightarrow{\psi}\mathbb{Z}M_n$ given by \[\psi(c(0,t),0)=c[(0,t)-(1,s)],\ \ \ \psi(0, \sum_{i}a_i(x,i))=\sum_i a_i(x,i),\] for $0 \leq  i \leq {n-1}$ so that $\mathop{\mathrm{im}} \psi=\ker\varphi$.

Fix $w\in \{x,y,0,\ldots,n-1\}$ and compute: \begin{align*}
    \psi((w,i)(0,t), (w,i+1)-(w,i))&=(w,i)[(0,t)-(1,s)]+(w,i+1)-(w,i)\\
    &=(w,i)-(w,i+1)+(w,i+1)-(w,i)\\
    &=0,
\end{align*} so $((w,i)(0,t), (w,i+1)-(w,i)) \in \ker\psi$. This is clearly the only kind of element in the kernel (indeed, $\psi$ takes the summand $\mathbb ZT_n$ injectively to  $\mathbb ZT_n$), so we put $\mathbb{Z}K_n$ as our next projective module equipped with the injection \[0\to\mathbb{Z}K_n\xrightarrow{\xi}\mathbb{Z}M_n(0,t)\oplus \mathbb{Z}K_n\] defined by $\xi(w,i)=((w,i)(0,t),(w,i+1)-(w,i)),$ so that $\mathop{\mathrm{im}} \xi=\ker\psi$.

Our full projective resolution is therefore \[0\longleftarrow \mathbb{Z} \xleftarrow{\,\,\varepsilon\,\,} \mathbb{Z}M_n(x,0) \xleftarrow{\,\,\varphi\,\,} \mathbb{Z}M_n \xleftarrow{\,\,\psi\,\,} \mathbb{Z}M_n(0,t)\oplus \mathbb{Z}K_n \xleftarrow{\,\,\xi\,\,} \mathbb{Z}K_n \longleftarrow 0. \]

Deleting the leftmost $\mathbb Z$ and tensoring with the right trivial module $\mathbb{Z}$ yields the chain complex \[0 \longleftarrow \mathbb{Z}\xleftarrow{\,\,0\,\,} \mathbb{Z} \xleftarrow{\,\,A\,\,}\mathbb{Z}^{n+1}\xleftarrow{\,\,B\,\,} \mathbb{Z}^n \longleftarrow{} 0,\] where \[A=\begin{bmatrix}
  0 \\
  1 \\
  1 \\
  \vdots \\
  1 \\
\end{bmatrix}\ \text{and}\ B=\begin{bmatrix}
1 & -1 & 1 & 0&\cdots & 0 & 0 \\
1 & 0 & -1 & 1& \ddots & 0 & 0 \\
1 & 0 & 0 & -1&\ddots & 0 & 0 \\
\vdots & \vdots & \vdots &0& \ddots & \vdots & \vdots \\
1 & 0 & 0 & 0&\ddots & -1 & 1 \\
1 & 1 & 0 & 0&\cdots &  0 & -1
\end{bmatrix},\]
and both matrices are right multiplied against row vectors in the appropriate module.

Finally, we compute homology. Clearly, $H_0=\mathbb{Z}$. Since $A$ is surjective, $H_1=\ker{0}/\mathop{\mathrm{im}}{A}=\mathbb{Z}/\mathbb{Z}=0$.

To compute $H_2$, notice first that the sum of the rows of $B$ is $(n, 0, 0,\ldots)$, so we can replace the last row with $(n, 0, 0, \ldots)$ without changing its row space. Denote this matrix $B'$: \[B'=\begin{bmatrix}
1 & -1 & 1 & 0&\cdots & 0 & 0 \\
1 & 0 & -1 & 1& \ddots & 0 & 0 \\
1 & 0 & 0 & -1&\ddots & 0 & 0 \\
\vdots & \vdots & \vdots &0& \ddots & \vdots & \vdots \\
1 & 0 & 0 & 0&\ddots & -1 & 1 \\
n & 0 & 0 & 0&\cdots &  0 & 0
\end{bmatrix}.\]

Note now that $w=(w_0,w_1,\ldots, w_n)\in \ker A$ if and only if $\sum_{i=1}^nw_i=0$.  So $\ker A$ has as basis the first $n-1$ rows of $B'$ along with $(1, 0,\ldots,0)$. Thus, \[H_2\cong (\mathbb{Z}\oplus \cdots \oplus \mathbb{Z})/(n\mathbb{Z}\oplus \cdots\oplus \mathbb{Z})=\mathbb{Z}/n\mathbb{Z}.\]



Lastly, $H_n=0$ for all $n\geq 3$ (in particular, $H_3=0$ because $B$ is injective).
\end{proof}

We can now prove Theorem~B, which in turn implies Theorem~A.

\begin{Thm}
    Let $A_2,\ldots, A_n$ be a finite sequence of finitely generated groups.  Then there is a finite semigroup $S$ with simply connected classifying space that is homotopy equivalent to the wedge of Moore spaces $\bigvee_{i=2}^nM(A_i,i)$.
\end{Thm}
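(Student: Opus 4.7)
The plan is to reduce the target wedge to a wedge of Moore spaces of cyclic groups, realize each such Moore space by a finite monoid whose minimal ideal is a rectangular band, and then assemble everything using iterated application of Theorem~\ref{t:wedge}.

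First, I would reduce to cyclic summands. Each $A_i$ decomposes as a finite direct sum of cyclic groups (both $\mathbb{Z}$ summands and finite cyclic summands), and since $M(A\oplus B,i)\simeq M(A,i)\vee M(B,i)$, the target $\bigvee_{i=2}^{n}M(A_i,i)$ is homotopy equivalent to a finite wedge of spaces of the form $M(C,j)$ with $j\geq 2$ and each $C$ cyclic. It therefore suffices to produce, for each such $M(C,j)$, a finite monoid whose minimal ideal is a rectangular band and whose classifying space is $M(C,j)$, and then to take wedges while preserving this structural property.

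Next, I would handle each individual $M(C,j)$. For the base case $j=2$: when $C=\mathbb{Z}$, the $2\times 2$ rectangular band $F$ satisfies $BF\simeq S^{2}\simeq M(\mathbb{Z},2)$ by~\cite{Fiedor}, and $F$ is simple, so it is its own minimal ideal; when $C=C_n$ with $n\geq 2$, Theorem~\ref{t:moore.cyclic} supplies the finite monoid $M_n=S_n^{\mathbb 1}$, whose minimal ideal $K_n$ is a rectangular band by construction. For $j\geq 3$, I would apply Sweeney's suspension construction $J(-)$ from~\cite[Corollary~9.4]{homotopysmall} exactly $j-2$ times to the monoid realizing $M(C,2)$; as already noted in Section~3, each application of $J(-)$ yields a finite monoid whose minimal ideal is a rectangular band and whose classifying space is the suspension of the previous one, so after $j-2$ iterations the classifying space is $\Sigma^{j-2}M(C,2)\simeq M(C,j)$.

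Finally, I would assemble the wedge by iterating Theorem~\ref{t:wedge} over the finitely many monoids produced above. At each stage both input monoids have rectangular band minimal ideals, so Theorem~\ref{t:wedge} produces a finite submonoid of the product whose classifying space is the wedge of the two input classifying spaces and whose minimal ideal is $K\times J$, again a rectangular band since a product of rectangular bands is a rectangular band. After processing every summand, the result is a finite monoid, and hence a finite semigroup $S$ via $BS\simeq BS^{\mathbb 1}$, with $BS$ homotopy equivalent to $\bigvee_{i=2}^{n}M(A_i,i)$; simple connectedness of $BS$ is automatic because every factor in the wedge is simply connected ($j\geq 2$). The main conceptual difficulty has already been absorbed by Theorem~\ref{t:wedge} (the genuinely new ingredient, since the naive free product construction fails to preserve finiteness) and Theorem~\ref{t:moore.cyclic} (the cyclic Moore space realization); what remains at this stage is essentially bookkeeping together with the verification that the rectangular band minimal ideal property is preserved under both suspension and wedge.
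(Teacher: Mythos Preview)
Your proposal is correct and follows essentially the same approach as the paper: decompose into cyclic Moore spaces, realize each $M(C,2)$ via Fiedorowicz's $2\times 2$ rectangular band or Theorem~\ref{t:moore.cyclic}, suspend with Sweeney's $J(-)$ to reach higher degrees, and assemble with iterated applications of Theorem~\ref{t:wedge}, tracking throughout that the minimal ideal remains a rectangular band. The only cosmetic difference is the order of operations---the paper first wedges the cyclic pieces to form $M(A_i,2)$ and then suspends, whereas you suspend each cyclic piece individually before taking the final wedge---but this is immaterial since both constructions preserve the rectangular band minimal ideal.
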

\begin{proof}
If $A$ is a finitely generated abelian group, then $A\cong \mathbb Z^n\oplus C_{i_1}\oplus\cdots\oplus C_{i_k}$ where the $C_{i_j}$ are finite cyclic groups of order $i_j$.  By Fiedorowicz's observation (see~\cite{classifyingspacecompletelysimple} for proof), we can realize up to homotopy $S^2$ (an $M(\mathbb Z,2)$) as the classifying space of the $2\times 2$ rectangular band with an adjoined identity.  Theorem~\ref{t:moore.cyclic} lets us realize an $M(C_{i_j},2)$ as the classifying space  of a finite monoid with a minimal ideal that is a rectangular band.  Thus we can realize an $M(A,2)$ as the classifying space of a finite monoid with minimal ideal by Theorem~\ref{t:wedge}.  Using Sweeney's result~\cite{homotopysmall}, we can realize the homotopy type of the suspensions of the classifying space of any finite monoid with a finite monoid whose minimal ideal is a rectangular band.  Therefore, we can find a finite monoid with minimal ideal a rectangular band which is an  $M(A,i)$ for any $i\geq 2$.  The result then follows by another application of Theorem~\ref{t:wedge}.
\end{proof}

\section{Global dimension of regular monoids and some questions of Sweeney}\label{s:globdim}
Let $K$ be a commutative ring with unit and $M$ a finite regular monoid.  We provide upper and lower bounds on the (left) global dimension of $KM$, generalizing Nico's work over a field~\cite{Nico1,Nico2}.  We will use these results to prove two conjectures about the homology of classifying spaces of finite semigroups from~\cite{homotopysmall}.  First, we need some lemmas proved in~\cite{quasistrat} for algebras over a field.  

We follow here the theory of stratifying ideals~\cite{CPS2}, strongly idempotent ideals~\cite{auslanderidem} and homological epimorphisms~\cite{homologicalepi}.  Isbell's zig-zag theorem says that a ring homomorphism $\p\colon R\to S$ is an epimorphism if and only if the multiplication map $S\otimes_R S\to S$ is an $S$-bimodule isomorphism.  This occurs in particular if $\p$ is onto.  Note that this implies that, for any left $S$-module $A$, the natural map $S\otimes_R A\to A$ is an isomorphism, as $S\otimes_R A\cong S\otimes_R (S\otimes_S A)\cong (S\otimes_R S)\otimes_S A\cong S\otimes_S A\cong A$, and this composition of isomorphisms is the natural map.

A ring homomorphism $\p\colon R\to S$ is called a \emph{homological epimorphism}~\cite{homologicalepi} if, for every right $S$-module $A$ and left $S$-module $B$, the natural map \[\Tor_i^R(A,B)\to \Tor_i^S(A,B)\] is an isomorphism for all $i\geq 0$.  There are a number of equivalent formulations of this property.  The following is extracted from~\cite[Theorem~4.4]{homologicalepi}.

\begin{Thm}[Geigle-Lenzing]
Let $\p\colon R\to S$ be a ring homomorphism.  Then the following are equivalent.
\begin{enumerate}
  \item $\p$ is a homological epimorphism.
  \item The natural map \[\Ext^i_S(A,B)\to \Ext^i_R(A,B)\] is an isomorphism for all left $S$-modules $A,B$ and $i\geq 0$.
  \item $\p$ is an epimorphism and $\Tor_i^R(S,S)=0$ for all $i\geq 1$.
  \item $\p$ is an epimorphism and  $\Ext^i_R(S,S)=0$ for all $i\geq 1$.
\end{enumerate}
\end{Thm}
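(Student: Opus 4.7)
My plan is to arrange the equivalences as two parallel cycles, (1) $\Leftrightarrow$ (3) on the Tor side and (2) $\Leftrightarrow$ (4) on the Ext side, and then bridge them with (1) $\Leftrightarrow$ (2). The direction (1) $\Rightarrow$ (3) is immediate: specializing (1) at $i=0$ and $A=B=S$ yields an $S$-bimodule isomorphism $S \otimes_R S \xrightarrow{\sim} S \otimes_S S = S$, which by Isbell's zig-zag theorem recalled in the excerpt is exactly the statement that $\p$ is an epimorphism; and for $i \geq 1$, (1) identifies $\Tor_i^R(S,S)$ with $\Tor_i^S(S,S) = 0$.

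The substantive direction is (3) $\Rightarrow$ (1). The epimorphism hypothesis yields the identification $S \otimes_R M \cong M$ for every left $S$-module $M$, already noted in the excerpt. I would first bootstrap the base vanishing $\Tor_i^R(S,S) = 0$ to $\Tor_i^R(S,B) = 0$ for every left $S$-module $B$ and every $i \geq 1$ by dimension shifting: for a short exact sequence $0 \to K \to F \to B \to 0$ with $F$ a free $S$-module, the long exact sequence of $\Tor^R(S,-)$ collapses---using $\Tor_1^R(S,F) = \bigoplus \Tor_1^R(S,S) = 0$ and $S \otimes_R - = \mathrm{id}$ on $S$-modules---to the original four-term sequence, forcing $\Tor_1^R(S,B) = 0$; the standard shift $\Tor_{i+1}^R(S,B) \cong \Tor_i^R(S,K)$ closes the induction. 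Now pick a projective $S$-resolution $P_\bullet \to A$ of a right $S$-module $A$: each $P_n$ is a summand of a free $S$-module, so $\Tor_i^R(P_n,B) = 0$ for $i \geq 1$, whence $P_\bullet$ is acyclic for $- \otimes_R B$ and computes $\Tor_\bullet^R(A,B)$ as $H_\bullet(P_\bullet \otimes_R B)$. But associativity combined with $S \otimes_R B = B$ gives $P_n \otimes_R B \cong P_n \otimes_S B$, so the same complex computes $\Tor_\bullet^S(A,B)$, and one checks that the resulting isomorphism is the natural comparison map.

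The pair (2) $\Leftrightarrow$ (4) admits a mirror argument: replace free $S$-modules by products of an injective cogenerator, tensor with $B$ by $\Hom_R(-,B)$, and invoke the analogous identity $\Hom_R(S,-) = \Hom_S(S,-)$ on $S$-modules that again encodes the epimorphism condition. Finally, (1) $\Leftrightarrow$ (2) is equivalent to full faithfulness of the derived restriction $D(S) \to D(R)$; concretely, applying the character-module functor $\Hom_{\mathbb Z}(-,\mathbb Q/\mathbb Z)$ translates a Tor-isomorphism statement into an Ext-isomorphism statement (and conversely) on enough modules to force the general case.

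The main obstacle I anticipate is the dimension-shift bootstrap in (3) $\Rightarrow$ (1): one must invoke the epimorphism condition at every step to identify $S \otimes_R -$ with the identity functor on $S$-modules, and thereby propagate the isolated vanishing $\Tor_i^R(S,S) = 0$ first across all $B$ and then across all $A$, while keeping the two nested resolutions straight. All remaining passages are essentially formal once this mechanism is in place.
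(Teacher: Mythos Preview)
The paper does not prove this theorem at all: it is quoted as a known result, extracted from \cite[Theorem~4.4]{homologicalepi}, and used as a black box thereafter. There is therefore no ``paper's own proof'' to compare your proposal against.

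As for your sketch itself: the cycle (1) $\Leftrightarrow$ (3) is fine and is essentially the standard argument. Your (3) $\Rightarrow$ (1) bootstrap is correct---the key input, that $S\otimes_R(-)$ is the identity on $S$-modules once $\p$ is an epimorphism, is exactly what makes the $i=1$ step of the dimension shift go through, and the higher $\Tor_i^R(S,F)$ vanish because $F$ is a direct sum of copies of $S$. The dual cycle (2) $\Leftrightarrow$ (4) is not quite as mechanically symmetric as you suggest: products of $S$ are not injective, so ``replace free modules by products of an injective cogenerator'' does not immediately propagate $\Ext^i_R(S,S)=0$ to $\Ext^i_R(S,B)=0$ for arbitrary $S$-modules $B$; one either needs a coresolution argument with a bit more care, or one routes through (3) via the character-module trick you mention for the bridge. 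Likewise, your (1) $\Leftrightarrow$ (2) via $\Hom_{\mathbb Z}(-,\mathbb Q/\mathbb Z)$ is the right idea but underspecified: the duality only directly produces $\Ext$-isomorphisms for modules of the form $B^\vee$, and you would need to say why this suffices (e.g., every module embeds in a product of such, or one argues via injective cogenerators). None of this is a genuine gap---each point can be completed by standard homological manoeuvres---but the write-up would need to be tightened beyond ``mirror argument''.
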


For the case of finite dimensional algebras, there has been much study of ideals $I$ with the property that  $R\to R/I$ is a homological epimorphism~\cite{auslanderidem,CPS2}.  One says that $I$ is a \emph{homological ideal} if $R\to R/I$ is a homological epimorphism. (The notion of a stratifying ideal~\cite{CPS2} is essentially the same thing for finite dimensional algebras.)
Most of the results work in full generality, but we sketch some proofs for completeness.  The following fact is well known (cf.~\cite[Exercise~19, p.~126]{CartanEilenberg}).

\begin{Prop}\label{p:idem.ideal.nec}
Let $I\lhd R$ be an ideal.  Then $\Tor_1^R(R/I,R/I)\cong I/I^2$, and so $\Tor_1^R(R/I,R/I)=0$ if and only if $I=I^2$.
\end{Prop}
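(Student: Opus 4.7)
The plan is to compute $\Tor_1^R(R/I,R/I)$ directly from the defining short exact sequence of right $R$-modules
$$0 \to I \to R \to R/I \to 0,$$
by tensoring with the left $R$-module $R/I$ and reading off the resulting long exact sequence of Tor groups.

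First I would observe that since $R$ is free (hence flat) as a right $R$-module, $\Tor_1^R(R,R/I) = 0$. Under the canonical identifications $R\otimes_R R/I \cong R/I$ and $R/I\otimes_R R/I \cong R/I$, the map induced by the quotient $R\to R/I$ becomes the identity on $R/I$, and so is an isomorphism. The long exact sequence therefore collapses to the short sequence
$$0 \to \Tor_1^R(R/I,R/I) \to I\otimes_R R/I \to 0,$$
yielding a natural isomorphism $\Tor_1^R(R/I,R/I) \cong I\otimes_R R/I$.

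Next I would identify $I\otimes_R R/I$ with $I/I^2$. Tensoring the same short exact sequence on the left with $I$ produces the right-exact sequence
$$I\otimes_R I \to I\otimes_R R \to I\otimes_R R/I \to 0.$$
Under the canonical isomorphism $I\otimes_R R \cong I$, the first map is the multiplication $I\otimes_R I \to I$, whose image is exactly $I^2$. Hence $I\otimes_R R/I \cong I/I^2$, and combining with the previous step gives $\Tor_1^R(R/I,R/I) \cong I/I^2$. The second claim of the proposition is then immediate, since $I/I^2 = 0$ if and only if $I = I^2$.

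This argument is a routine diagram chase and I do not anticipate any real obstacle; the only care required is in tracking the canonical identifications of the tensor products correctly.
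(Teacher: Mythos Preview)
Your argument is correct and is exactly the standard computation (this is the exercise in Cartan--Eilenberg that the paper cites). The paper itself does not supply a proof of this proposition, so there is nothing to compare against beyond noting that you have filled in the routine details correctly.
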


 The next result is well known in the context of finite dimensional algebras~\cite{CPS2,auslanderidem}.

\begin{Prop}\label{p:standard.strat}
Let $R$ be a ring and $I\lhd R$ an idempotent ideal which is flat as a left or right $R$-module.  Then $I$ is a homological ideal.
\end{Prop}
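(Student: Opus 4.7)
The plan is to verify condition (3) of the Geigle--Lenzing theorem stated just above: we must check that $\p\colon R\to R/I$ is a ring epimorphism and that $\Tor_i^R(R/I,R/I)=0$ for all $i\geq 1$. Since $\p$ is surjective, the first condition is automatic. The vanishing of $\Tor_1^R(R/I,R/I)$ is immediate from Proposition~\ref{p:idem.ideal.nec}, which identifies this group with $I/I^2$; this is zero by the idempotence hypothesis.

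For $i\geq 2$, I would apply the long exact sequence of $\Tor$ to the short exact sequence of $R$-bimodules
\[0\to I\to R\to R/I\to 0.\]
First suppose $I$ is flat as a left $R$-module. Applying $R/I\otimes_R -$ produces a long exact sequence
\[\cdots\to \Tor_i^R(R/I,R)\to \Tor_i^R(R/I,R/I)\to \Tor_{i-1}^R(R/I,I)\to \cdots\]
The term $\Tor_i^R(R/I,R)$ vanishes for all $i\geq 1$ since $R$ is flat over itself, and $\Tor_{i-1}^R(R/I,I)$ vanishes for $i-1\geq 1$ by left-flatness of $I$. Hence $\Tor_i^R(R/I,R/I)=0$ for all $i\geq 2$, as desired.

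If instead $I$ is flat as a right $R$-module, the argument is completely dual: apply $-\otimes_R R/I$ to the same short exact sequence and use that $\Tor_j^R(I,-)=0$ for $j\geq 1$ together with the flatness of $R$. There is no substantive obstacle — the proof is a standard dimension-shifting argument, with the idempotence hypothesis entering only to handle the $i=1$ case (which flatness alone does not kill) via Proposition~\ref{p:idem.ideal.nec}.
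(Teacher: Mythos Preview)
Your proof is correct and follows essentially the same route as the paper: verify condition~(3) of the Geigle--Lenzing theorem, kill $\Tor_1$ via Proposition~\ref{p:idem.ideal.nec}, and kill the higher $\Tor$ groups by applying the long exact sequence associated to $0\to I\to R\to R/I\to 0$ together with flatness. The only cosmetic difference is that the paper writes ``without loss of generality'' for the left-flat case rather than spelling out the dual argument.
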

\begin{proof}
Obviously, $R\to R/I$ is an epimorphism.
Since $I=I^2$, Proposition~\ref{p:idem.ideal.nec} shows that $\Tor_1^R(R/I,R/I)=0$. Without loss of generality, assume that $I$ is flat as a left $R$-module.   If $i\geq 2$, the exact sequence $0\to I\to R\to R/I$ yields an exact sequence $0\to \Tor_i^R(R/I,R/I)\to \Tor_{i-1}^R(R/I,I)\to 0$, and hence $\Tor_i^R(R/I,R/I)\cong \Tor_{i-1}^R(R/I,I)=0$, as $I$ is flat.  Thus $I$ is a homological ideal.
\end{proof}

Our next lemma is known to experts.

\begin{Lemma}\label{l:eRisproj}
Let $R$ be a ring and $e\in R$ an idempotent.  Suppose that $ReR$ is a projective left $R$-module.   Then $eR$ is a projective left $eRe$-module, and so $eQ$ is a projective $eRe$-module for any projective left $R$-module $Q$.
\end{Lemma}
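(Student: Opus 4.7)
The plan is to transport a dual basis of $ReR$ as a left $R$-module into a dual basis of $eR$ as a left $eRe$-module, by projecting away the part of each coordinate map that takes values outside of $eRe$. The setup starts by observing that $ReR$ is generated as a left $R$-module by its subset $eR$, since any element $\sum r_i e r'_i\in ReR$ is the $R$-linear combination $\sum r_i\cdot(er'_i)$ of elements of $eR$. Thus there is a canonical left $R$-linear surjection $R^{(eR)}\twoheadrightarrow ReR$ sending the basis vector indexed by $y\in eR$ to $y$ itself. Because $ReR$ is projective, this surjection splits, yielding a dual basis $\{(m_y,\phi_y)\}_{y\in eR}$ for $ReR$ with $m_y=y\in eR$, each $\phi_y\in\Hom_R(ReR,R)$, and $x=\sum_y \phi_y(x)\,m_y$ for every $x\in ReR$ (with only finitely many nonzero terms per $x$).

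Next I would invoke the Peirce decomposition $eR = eRe \oplus eR(1-e)$, which is valid as a decomposition of both left and right $eRe$-modules, and let $\pi\colon eR\to eRe$, $er\mapsto ere$, denote the associated projection (an $eRe$-bilinear map). For $em\in eR\subseteq ReR$, $R$-linearity applied to $em=e\cdot em$ gives $\phi_y(em)=e\phi_y(em)\in eR$, so $\phi_y|_{eR}\colon eR\to eR$ is well defined and $eRe$-linear. Setting $\psi_y:=\pi\circ\phi_y|_{eR}\colon eR\to eRe$ then produces $eRe$-linear coordinate maps. The heart of the argument is the observation that the ``$(1-e)$-part'' of $\phi_y(em)$ is annihilated upon right multiplication by $m_y$: writing $\phi_y(em)=\psi_y(em)+\phi_y(em)(1-e)$, we have $\phi_y(em)(1-e)\cdot m_y=\phi_y(em)\cdot((1-e)m_y)=0$, because $m_y\in eR$ forces $(1-e)m_y=m_y-em_y=0$. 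Summing the resulting equalities $\phi_y(em)\,m_y=\psi_y(em)\,m_y$ over $y$ gives $em=\sum_y\psi_y(em)\,m_y$, which is precisely the dual basis condition exhibiting $eR$ as a projective left $eRe$-module.

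The final assertion is then immediate: any projective left $R$-module $Q$ is a direct summand of a free module $R^{(I)}$, and applying the exact functor $e(-)$ exhibits $eQ$ as a direct summand of $(eR)^{(I)}$, which is $eRe$-projective by the first part. The only real obstacle lies in the setup rather than in the calculation itself: the argument works precisely because the indexing set for the dual basis of $ReR$ can be chosen to be $eR$, so that the generators $m_y$ lie in $eR$ and the identity $(1-e)m_y=0$ kicks in; with that arranged, the Peirce projection $\pi$ automatically converts each $R$-valued coordinate $\phi_y$ into an $eRe$-valued coordinate $\psi_y$ without disturbing the dual basis identity.
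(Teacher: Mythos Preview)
Your argument is correct and follows the same strategy as the paper: exhibit $ReR$ as a quotient of a direct sum indexed by $eR$, split using projectivity, then hit everything with $e$ on the left. The only difference is cosmetic: the paper takes the covering module to be $\bigoplus_{x\in eR} Re$ rather than $R^{(eR)}$, so that after applying the functor $e(-)$ one lands directly in the free $eRe$-module $\bigoplus_{x\in eR} eRe$ with $eReR=eR$ as a summand, and your Peirce-projection step is absorbed into that choice.
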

\begin{proof}
Let $P=\bigoplus_{x\in eR} Re$.  Then $P$ is a projective $R$-module and we have a surjective $R$-module homomorphism $\Psi\colon P\to ReR$ sending $\sum_{x\in eR}r_x$ to $r_xx\in ReR$.  Since $ReR$ is projective, $\Psi$ must split, and so $P\cong ReR\oplus P'$.  Therefore $eR=eReR$ is a direct summand in $eP\cong \bigoplus_{x\in eR} eRe$.  We conclude that $eR$ is projective as a left $eRe$-module.
\end{proof}

The next lemma is a straightforward exercise in dimension shifting.

\begin{Lemma}\label{l:pdim.quot}
Let $R$ be a ring and $I$ an ideal.  Let $A$ be an $R/I$-module.  Then $\pdim_R A\leq \pdim_{R/I} A+\pdim_R I+1$.
\end{Lemma}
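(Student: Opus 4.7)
The plan is to proceed by induction on $n=\pdim_{R/I}A$, establishing $\pdim_R A\le n+\pdim_R I+1$. The key preliminary observation is that every projective $R/I$-module has $R$-projective dimension at most $\pdim_R I+1$. Indeed, for a free $R/I$-module $F=(R/I)^{(B)}$, the short exact sequence of $R$-modules
\[0\to I^{(B)}\to R^{(B)}\to F\to 0\]
together with the fact that $\pdim_R I^{(B)}\le \pdim_R I$ (direct sums do not increase projective dimension) gives $\pdim_R F\le \pdim_R I+1$ via Lemma~\ref{l:brown}. Any projective $R/I$-module $Q$ is a direct summand of some such $F$, and projective dimension cannot increase on passing to summands, so $\pdim_R Q\le \pdim_R I+1$.

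The base case $n=0$ says that $A$ is itself $R/I$-projective, and the observation above immediately yields $\pdim_R A\le \pdim_R I+1$, as required. For the inductive step with $n\ge 1$, I would pick any free $R/I$-module $Q$ surjecting onto $A$ and let $K$ be its kernel, producing a short exact sequence $0\to K\to Q\to A\to 0$ of $R/I$-modules with $\pdim_{R/I}K\le n-1$. The inductive hypothesis, applied to $K$, gives $\pdim_R K\le (n-1)+\pdim_R I+1=n+\pdim_R I$. Feeding the short exact sequence into the long exact sequence for $\Ext_R^{\bullet}(-,M)$ and invoking Lemma~\ref{l:brown} yields the standard bound $\pdim_R A\le \max(\pdim_R Q,\pdim_R K+1)$, which is at most $n+\pdim_R I+1$, completing the induction.

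There is no serious obstacle in this argument. The only conceptual subtlety is that $I$ is intrinsically an $R$-module rather than an $R/I$-module, so it must be resolved over $R$ separately; that is exactly why the extra term $\pdim_R I+1$ appears in the bound. Everything else is a routine dimension-shifting exercise combined with the preservation of projective dimension under direct sums and summands.
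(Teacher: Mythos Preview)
Your proof is correct and follows essentially the same approach as the paper's: both induct on $\pdim_{R/I}A$, first establish that projective $R/I$-modules have $R$-projective dimension at most $\pdim_R I+1$ via the exact sequence $0\to I\to R\to R/I\to 0$, and then dimension-shift along a short exact sequence $0\to K\to Q\to A\to 0$ using the long exact $\Ext$ sequence. The only cosmetic difference is that you spell out the direct-sum/summand step for free and projective $R/I$-modules a bit more explicitly.
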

\begin{proof}
If either of $\pdim_{R/I} A$ or $\pdim_R I$ is infinite, there is nothing to prove.  So we assume these are finite and proceed by induction on  $\pdim_{R/I} A$. Put $s=\pdim_R I$.   First note that $\pdim_R R/I\leq s+1$ because of the exact sequence $0\to I\to R\to R/I\to 0$.  It follows that if $P$ is a projective $R/I$-module, then $\pdim_R P\leq s +1$.  Suppose that the result holds when $\pdim_{R/I} B=r\geq 0$ and that $\pdim_{R/I} A=r+1$.  Then there is an exact sequence $0\to B\to P\to A\to 0$ of $R/I$-modules where $P$ is a projective $R/I$-module and $\pdim_{R/I} B=r$.  For every $R$-module $C$ we have an exact sequence \[0=\Ext_R^{r+s+2}(B,C)\to \Ext_R^{r+s+3}(A,C)\to \Ext_R^{r+s+3}(P,C)=0\] where we have used that $\pdim_R P\leq s+1$ and $\pdim_R B\leq r+s+1$.   Thus $\pdim_R A\leq r+s+2$.
\end{proof}

In what follows $\gldim R$ will denote the left global dimension of a ring $R$.  Note that for a noetherian ring like $KM$ with $M$ a finite monoid and $K$ a commutative noetherian ring, one has that the left and right global dimensions coincide.  In any event, all of the results here about left global dimension have analogues for right global dimension.

\begin{Prop}\label{p:cornergldim}
Suppose that $R$ is a ring, $e\in R$ is an idempotent and $ReR$ is a projective left $R$-module.  Then
\begin{enumerate}
\item $\gldim eRe\leq \gldim R$.
\item $\gldim R/ReR\leq \gldim R$.
\item $\gldim R\leq \gldim eRe+\gldim R/ReR+2$.
\end{enumerate}
\end{Prop}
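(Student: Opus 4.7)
The plan is to prove the three parts in turn, with (3) being the main technical content. For part (1), given an $eRe$-module $A$, form $\widetilde A=Re\otimes_{eRe}A$ and take a projective $R$-resolution $P_\bullet\to\widetilde A$ of length at most $\gldim R$. The functor $e(-)$ is exact (multiplication by the idempotent $e$ is a projection onto a direct summand of $R$) and sends projective $R$-modules to projective $eRe$-modules by Lemma~\ref{l:eRisproj}, so applying it to $P_\bullet$ yields a projective $eRe$-resolution of $eRe\otimes_{eRe}A\cong A$ of the same length. For part (2), $ReR$ is idempotent ($ReReR=ReR$) and projective as a left $R$-module, so Proposition~\ref{p:standard.strat} shows that $R\to R/ReR$ is a homological epimorphism; the Geigle-Lenzing characterization then yields $\Ext_R^i(A,B)\cong\Ext_{R/ReR}^i(A,B)$ for all $R/ReR$-modules $A,B$, whence $\gldim R/ReR\leq\gldim R$.

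For (3), given any $R$-module $A$, consider the short exact sequence $0\to ReA\to A\to A/ReA\to 0$. Since $ReR\cdot A=ReA$, the quotient $A/ReA$ is an $R/ReR$-module; because $\pdim_R(R/ReR)\leq 1$ (from $0\to ReR\to R\to R/ReR\to 0$ with $ReR$ projective), Lemma~\ref{l:pdim.quot} gives $\pdim_R N\leq\gldim R/ReR+1$ for any $R/ReR$-module $N$, so in particular $\pdim_R(A/ReA)\leq\gldim R/ReR+1$. For the submodule $ReA$, use the multiplication surjection $\mu\colon Re\otimes_{eRe}eA\twoheadrightarrow ReA$, $re\otimes ea\mapsto rea$, and let $K=\ker\mu$. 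The decomposition $Re=eRe\oplus(1-e)Re$ of right $eRe$-modules makes $eA\hookrightarrow Re\otimes_{eRe}eA$ (via $ea\mapsto e\otimes ea$) a split injection whose image equals $e\cdot(Re\otimes_{eRe}eA)$, and $\mu$ restricts to the identity on that copy of $eA$; hence $eK=0$. Since $ReR\cdot X=R\cdot eX$ for any $R$-module $X$ (using $RX=X$), we conclude $ReR\cdot K=0$, so $K$ is an $R/ReR$-module with $\pdim_R K\leq\gldim R/ReR+1$.

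The remaining and central estimate is $\pdim_R(Re\otimes_{eRe}eA)\leq\gldim eRe+\gldim R/ReR+2$. The functor $Re\otimes_{eRe}-$ need not be exact because, under the one-sided hypothesis of the proposition, $Re$ need not be flat over $eRe$ on the right. The saving observation is that its higher derived functors are mild: applying the exact functor $e(-)$ to any complex $Re\otimes_{eRe}Q_\bullet$ computing $\Tor_\ast^{eRe}(Re,N)$ recovers $eRe\otimes_{eRe}Q_\bullet=Q_\bullet$, so $e\cdot\Tor_i^{eRe}(Re,N)=0$ for all $i\geq 1$, and each higher Tor is thus an $R/ReR$-module of $R$-projective dimension at most $\gldim R/ReR+1$. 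Induct on $k=\pdim_{eRe}(eA)$: for a short exact sequence $0\to K'\to Q\to eA\to 0$ of $eRe$-modules with $Q$ projective, applying $Re\otimes_{eRe}-$ produces the four-term exact sequence
\[0\to\Tor_1^{eRe}(Re,eA)\to Re\otimes_{eRe}K'\to Re\otimes_{eRe}Q\to Re\otimes_{eRe}eA\to 0,\]
which splits into two short exact sequences; combining the induction hypothesis on $Re\otimes_{eRe}K'$ with the Tor bound and the projectivity of $Re\otimes_{eRe}Q$ (as $Q$ is $eRe$-projective) yields the inductive step. Assembling these bounds gives $\pdim_R(ReA)\leq\gldim eRe+\gldim R/ReR+2$ and finally $\pdim_R A\leq\max(\pdim_R ReA,\pdim_R(A/ReA))\leq\gldim eRe+\gldim R/ReR+2$.

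The main obstacle is the asymmetry of the hypothesis: because only left-module projectivity of $ReR$ is assumed, the functor $Re\otimes_{eRe}-$ fails to be exact and we are forced to track its higher derived functors. Fortunately these are automatically annihilated by $e$, hence by $ReR$, so they are controlled by the independently established bound on projective dimensions of $R/ReR$-modules over $R$.
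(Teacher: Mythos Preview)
Your proof is correct. Parts~(1) and~(2) match the paper's argument essentially verbatim.

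For part~(3) the strategies diverge in how they handle the possible failure of $Re$ to be flat as a right $eRe$-module. The paper first performs a dimension shift: it takes an $r$-step partial projective $R$-resolution of $A$ (with $r=\gldim eRe$) to obtain a syzygy $\Omega_r$, and applying the exact functor $e(-)$ together with Lemma~\ref{l:brown} shows that $e\Omega_r$ is already $eRe$-projective. Hence $Re\otimes_{eRe}e\Omega_r$ is outright $R$-projective and no $\Tor$ terms ever appear. One then bounds $\pdim_R\Omega_r\leq s+2$ via the multiplication map $Re\otimes_{eRe}e\Omega_r\to Re\Omega_r$ and the quotient $\Omega_r/Re\Omega_r$ (both steps producing only $R/ReR$-modules), yielding $\pdim_R A\leq r+s+2$.

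You instead work directly with $A$, letting the non-exactness surface as $\Tor_1^{eRe}(Re,-)$ and controlling it by the same key observation (these Tor groups are annihilated by $e$, hence by $ReR$). Your induction on $k=\pdim_{eRe}(eA)$, with the precise inductive statement $\pdim_R(Re\otimes_{eRe}N)\leq\pdim_{eRe}N+s+2$, closes correctly once one tracks the two short exact sequences obtained from the four-term sequence. The paper's dimension-shift is slicker because it reduces to a single application of the multiplication-map argument with a projective domain; your route is a bit heavier on bookkeeping but makes more transparent exactly where the one-sided projectivity hypothesis is (and is not) used.
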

\begin{proof}
For the first item, let $M$ be an $eRe$-module.  Then $\ov M=Re\otimes_{eRe} M$ is an $R$-module and $e\ov M=e(Re\otimes_{eRe}M)\cong eRe\otimes _{eRe} M\cong M$ as $eRe$-modules.  Let $P_\bullet\to \ov M$ be a projective resolution of length at most $\gldim R$.  Then by Lemma~\ref{l:eRisproj} and exactness of $N\mapsto eN$, we have that  $eP_\bullet\to M$ is a projective resolution of length at most $\gldim R$.  Thus $\gldim eRe\leq \gldim R$.

Since $R\to R/ReR$ is a homological epimorphism by Proposition~\ref{p:standard.strat}, if $A,B$ are $R/ReR$-modules and $n>\gldim R$, then $\Ext^n_{R/ReR}(A,B)\cong \Ext^n_R(A,B)=0$.  Thus $\gldim R/ReR\leq \gldim R$ by Lemma~\ref{l:brown}.

For the final item, if either $\gldim eRe$ or $\gldim R/ReR$ is infinite, then there is nothing to prove.  Else, assume $\gldim eRe=r$ and $\gldim R/ReR=s$.  Let $A$ be an $R$-module.  Construct an exact sequence \[0\to \Omega_r\to P_{r-1}\to\cdots\to P_0\to A\to 0\] with $P_0,\ldots, P_{r-1}$ projective.  Then we have that \[0\to e\Omega_r\to eP_{r-1}\to\cdots\to eP_0\to eA\to 0\] is exact and, by Lemma~\ref{l:eRisproj}, $eP_0,\ldots, eP_0$ are projective $eRe$-modules.  Since $\pdim_{eRe} eA\leq r$, it follows that $e\Omega_r$ is a projective $eRe$-module by Lemma~\ref{l:brown}.  Since $Re$ is a projective $R$-module, it follows that $Re\otimes_{eRe} e\Omega_r$ is a projective $R$-module.

The multiplication map $\Phi\colon Re\otimes_{eRe}e\Omega_r\to Re\Omega_r$ is surjective and induces an isomorphism $e\Phi\colon e(Re\otimes_{eRe}e\Omega_r)\to e\Omega_r$.  It follows that if $L=\ker \Phi$, then $eL=0$.  Therefore, $L$ is an $R/ReR$-module.  So $\pdim_R L\leq s+1$ by Lemma~\ref{l:pdim.quot}.  Then the exact sequence
\[0\to L\to Re\otimes_{eRe} e\Omega_r \to Re\Omega_r\to 0\] and projectivity of the middle term yields that $\pdim_R Re\Omega_r\leq s+2$.

The exact sequence
\[0\to Re\Omega_r\to \Omega_r\to \Omega_r/Re\Omega_r\to 0\] then yields
\[\pdim_R\Omega_r\leq \max\{\pdim_R Re\Omega_r,\pdim_R \Omega_r/Re\Omega_r\}\leq \max\{s+2,s+1\}= s+2\] by Lemma~\ref{l:pdim.quot} and the above paragraph.  It follows that $\pdim_R A\leq r+s+2$, as required.
\end{proof}

The next result generalizes a standard bound for the global dimension of quasihereditary algebras.

\begin{Cor}\label{c:heredity.chain}
Let $1=e_0,e_1,\ldots, e_n=0$ be idempotents of $R$ such that if $I_j=Re_jR$, then \[0=I_n\subseteq I_{n-1}\subseteq\cdots\subseteq I_1\subseteq I_0=R\] and $I_j/I_{j+1}$ is a projective left $R/I_{j+1}$-module for $0\leq j\leq n-1$.  Then the inequalities \[\gldim (e_jRe_j/e_jI_{j+1}e_j)\leq \gldim R\leq \sum_{i=0}^{n-1}\gldim (e_iRe_i/e_iI_{i+1}e_i)+2(n-1)\] hold for all $0\leq j\leq n$.
\end{Cor}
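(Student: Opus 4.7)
The plan is to apply Proposition~\ref{p:cornergldim} to the quotient ring $R/I_{j+1}$ with the idempotent $\bar e_j$, the image of $e_j$, for each $0\leq j\leq n-1$. The first step is to record the routine ring-theoretic identifications $(R/I_{j+1})\bar e_j(R/I_{j+1})=I_j/I_{j+1}$, $\bar e_j(R/I_{j+1})\bar e_j = e_jRe_j/e_jI_{j+1}e_j$, and $(R/I_{j+1})/(I_j/I_{j+1})=R/I_j$. The first of these is projective as a left $R/I_{j+1}$-module by the standing hypothesis on the chain, so Proposition~\ref{p:cornergldim} applies to $R/I_{j+1}$ and $\bar e_j$, yielding the three inequalities
\begin{align*}
\gldim(e_jRe_j/e_jI_{j+1}e_j)&\leq \gldim R/I_{j+1},\\
\gldim R/I_j &\leq \gldim R/I_{j+1},\\
\gldim R/I_{j+1} &\leq \gldim(e_jRe_j/e_jI_{j+1}e_j)+\gldim R/I_j+2.
\end{align*}

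For the lower bound, iterating the middle inequality gives $\gldim R/I_j\leq \gldim R/I_n=\gldim R$ for every $j$, and combining with the top inequality delivers $\gldim(e_jRe_j/e_jI_{j+1}e_j)\leq \gldim R$ for each $j$. For the upper bound, I iterate the third inequality from $j=n-1$ down to $j=1$, picking up one corner dimension and an additive $2$ at each of the $n-1$ steps, to obtain
$$\gldim R=\gldim R/I_n\leq \sum_{i=1}^{n-1}\gldim(e_iRe_i/e_iI_{i+1}e_i)+\gldim R/I_1+2(n-1).$$
Since $e_0=1$, we have $\gldim R/I_1=\gldim(e_0Re_0/e_0I_1e_0)$, which absorbs into the sum as the $i=0$ term and produces the claimed bound.

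The whole argument is essentially bookkeeping once Proposition~\ref{p:cornergldim} is in hand; the only conceptual point is verifying that the hypothesis that $I_j/I_{j+1}$ is a projective left $R/I_{j+1}$-module is exactly the input that the proposition requires after descending to the quotient ring, which is immediate from the identification $(R/I_{j+1})\bar e_j(R/I_{j+1})=I_j/I_{j+1}$. Consequently I do not expect a substantial obstacle beyond making sure the indexing of the iteration is correct so that the $i=0$ term of the sum is accounted for by the base case $\gldim R/I_1$.
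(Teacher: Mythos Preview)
Your proof is correct and follows essentially the same approach as the paper's: both apply Proposition~\ref{p:cornergldim} at each level of the chain, using the identifications you record. The only organizational difference is that the paper packages this as a formal induction on $n$ (applying the inductive hypothesis to $R/I_n$ and then Proposition~\ref{p:cornergldim} to $R$ with $e_n$), whereas you unwind the induction into an explicit iteration of the three inequalities; the content is identical.
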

\begin{proof}
We induct on $n$.  If $n=1$, then $I_0=R$, $I_1=0$ and $R=e_0Re_0/e_0I_1e_0$, whence the inequalities are actually equalities.  Assume the result holds for $n$, and we prove it for $n+1$.  Then applying the inductive hypotheses to $R/I_n$ with respect to the idempotents $e_0+I_n,\ldots, e_n+I_n$, we see that \[\gldim (e_kRe_k/e_kI_{k+1}e_k)\leq \gldim R/I_n\leq \sum_{i=0}^{n-1} \gldim (e_iRe_i/e_iI_{i+1}e_i)+2(n-1)\] for all $0\leq k\leq n$. Applying Proposition~\ref{p:cornergldim} to $R$ and $e_n$ yields
\begin{align*}
\gldim R &\leq \gldim e_nRe_n+\gldim R/I_n+2 \\ &\leq \sum_{i=0}^{n} \gldim (e_iRe_i/e_iI_{i+1}e_i)+2n.
\end{align*}
Also since $\gldim R/I_n, \gldim e_nRe_n\leq \gldim R$ by Proposition~\ref{p:cornergldim}, we see that the lower bounds hold.
 \end{proof}

In order to apply this to monoid algebras, we shall need the following well-known result.   

\begin{Prop}\label{p:indep.of.base}
Let $K$ be a commutative ring with unit, $M$ a monoid and $V$ a $KM$-module.  Then, as abelian groups, $H_n(M,V)\cong \Tor_n^{KM}(K,V)$ and $H^n(M,V)\cong \Ext^n_{KM}(K,V)$ where $K$ has the trivial $K$-module structure.
\end{Prop}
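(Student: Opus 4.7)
The plan is to prove both isomorphisms uniformly by using the unnormalized bar resolution, which specializes to a free resolution of the trivial module over either $\mathbb ZM$ or $KM$ with essentially the same underlying combinatorics.

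For a commutative ring $R$, let $B^R_\bullet(M)$ denote the bar resolution of the trivial right $RM$-module $R$: the module $B^R_n(M)$ is the free right $RM$-module on $M^n$, with basis elements written $[m_1|\cdots|m_n]$; the differential is the standard alternating sum
\[
d[m_1|\cdots|m_n] = [m_2|\cdots|m_n] + \sum_{i=1}^{n-1}(-1)^i[m_1|\cdots|m_im_{i+1}|\cdots|m_n] + (-1)^n[m_1|\cdots|m_{n-1}]\cdot m_n;
\]
and the augmentation $B^R_0(M)=RM\to R$ sends every element of $M$ to $1$. The usual contracting homotopy (prepending the unit of $M$) is $R$-linear for any $R$, so this really is a free $RM$-resolution of $R$.

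Now tensor with $V$ on the right. Since $B^R_n(M)$ is free on $M^n$ as a right $RM$-module, the canonical identification
\[
B^R_n(M)\otimes_{RM} V \;\cong\; R[M^n]\otimes_R V \;\cong\; \bigoplus_{(m_1,\ldots,m_n)\in M^n} V
\]
produces the same underlying abelian group regardless of whether $R=\mathbb Z$ or $R=K$, because $V$ is already a $K$-module. Under this identification, the induced differential on $V^{(M^n)}$ involves only the multiplication of $M$ and the left $M$-action on $V$ (the only nontrivial term is the final one, which becomes $(-1)^n[m_1|\cdots|m_{n-1}]\otimes m_n v$). Hence the chain complexes $B^{\mathbb Z}_\bullet(M)\otimes_{\mathbb ZM} V$ and $B^K_\bullet(M)\otimes_{KM} V$ are isomorphic, so their homology groups $H_n(M,V)=\Tor_n^{\mathbb ZM}(\mathbb Z,V)$ and $\Tor_n^{KM}(K,V)$ agree as abelian groups.

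The cohomology statement follows by the mirror argument: apply $\Hom_{RM}(-,V)$ to the left bar resolution to obtain a cochain complex whose degree $n$ term is $\mathrm{Map}(M^n,V)$ with the usual cobar differential, and observe that this complex is independent of whether $R=\mathbb Z$ or $R=K$. The result is folklore and there is no substantive obstacle. The only care required is bookkeeping the left/right conventions and verifying that the induced differentials really do coincide under the identifications above, both of which are immediate from the explicit bar formulas.
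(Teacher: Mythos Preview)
Your proof is correct and takes essentially the same approach as the paper: both compute the Tor groups via a free resolution that base-changes cleanly from $\mathbb ZM$ to $KM$. The only difference is that you work with the explicit bar resolution (which visibly has the same combinatorial form over any base ring, so no separate exactness check is needed), whereas the paper takes an arbitrary free $\mathbb ZM$-resolution $F_\bullet$, notes that $F_\bullet\otimes_{\mathbb Z} K$ remains a free $KM$-resolution of $K$ because each $F_n$ is $\mathbb Z$-free, and then uses the natural isomorphism $(F_\bullet\otimes_{\mathbb Z} K)\otimes_{KM} V\cong F_\bullet\otimes_{\mathbb ZM} V$.
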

\begin{proof}
We just handle the first statement, as the proof of the second is similar.  Let $F_\bullet\to \mathbb Z$ be a free resolution of the trivial right $\mathbb ZM$-module $\mathbb Z$.  Note that $F_\bullet \otimes_{\mathbb Z}K\to \mathbb Z\otimes_{\mathbb Z} K\cong K$ is then a free resolution of $K$ as a $KM$-module.  The reason this chain complex is exact is because $\mathbb ZM$ is a free abelian group, and so $H_n(F_\bullet\otimes_{\mathbb Z} K)\cong \Tor^{\mathbb Z}_n(\mathbb Z,K)=0$ for $n\geq 1$.  Thus, we have
$\Tor_n^{KM}(K,V)\cong H_n((F_{\bullet}\otimes_{\mathbb Z}K)\otimes_{KM} V)\cong H_n(F_{\bullet}\otimes_{\mathbb ZM} V)\cong \Tor^{\mathbb ZM}_n(\mathbb Z,V)=H_n(M,V)$.  Here we used that for any $\mathbb ZM$-module $A$, there is a natural isomorphism $(A\otimes_{\mathbb Z} K)\otimes_{KM} V\to A\otimes_{\mathbb ZM} V$ sending $a\otimes c\otimes v$ to $a\otimes cv$ with inverse $a\otimes v\mapsto a\otimes 1\otimes v$.
\end{proof}

Recall that if $G$ is a group and $A,B$ are left $KG$-modules, then $A\otimes_K B$ and $\Hom_K(A,B)$ are left $KG$-modules, the former via $g(a\otimes b) =ga\otimes gb$ and the latter via $(gf)(a) = gf(g\inv a)$.  Moreover, one has an isomorphism $\Hom_{KG}(A\otimes_K B,C)\cong \Hom_{KG}(A,\Hom_K(B,C))$, natural in $A$, $B$ and $C$, taking $p\colon A\otimes_K B\to C$ to $\Phi$ with $\Phi(a)(b) = \p(a\otimes b)$.  In particular, $\Hom_{KG}(B,C)\cong \Hom_{KG}(K,\Hom_K(B,C))$, naturally in $B$ and $C$.  Let $P_\bullet\to A$ be a projective $KG$-module resolution.  Then we have a cochain complex of $KG$-modules $\Hom_K(P_\bullet,B)$ for any left $KG$-module $B$.  Since $KG$ is a free $K$-module, $P_\bullet\to A$ is a projective resolution of $A$ as a $K$-module.  Therefore, $\Ext^n_K(A,B)=H^n(\Hom_K(P_\bullet,B))$ is a $KG$-module, and the module structure is independent of the choice of $KG$-projective resolution of $A$.  The following spectral sequence is folklore.

\begin{Prop}\label{p:spseq}
Let $K$ be a commutative ring and $G$ a group. Let $A,B$ be $KG$-modules. Then there is a convergent spectral sequence \[\Ext^p_{KG}(K,\Ext_K^q(A,B))\Rightarrow \Ext_{KG}^{p+q}(A,B).\]
\end{Prop}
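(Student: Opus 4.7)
The plan is to realize the spectral sequence as a Grothendieck composition-of-functors spectral sequence. The factorization to exploit is
\[\Hom_{KG}(A,-)\cong \Hom_{KG}(K,-)\circ \Hom_K(A,-),\]
which is precisely the adjunction recalled immediately before the statement, where $\Hom_K(A,-)$ is regarded as an endofunctor on $KG$-mod using the conjugation $G$-action on $\Hom_K(A,B)$.

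First I would identify the derived functors of each piece. Clearly $R^p\Hom_{KG}(K,-)=\Ext^p_{KG}(K,-)$. For the claim $R^q\Hom_K(A,-)(B)=\Ext^q_K(A,B)$ as $KG$-modules, the key observation is that any $KG$-injective resolution of $B$ is simultaneously a $K$-injective resolution: restriction of scalars $KG\text{-mod}\to K\text{-mod}$ has an exact left adjoint $KG\otimes_K(-)$ because $KG$ is $K$-free, and so preserves injectives. The $KG$-module structure one then obtains on $\Ext^q_K(A,B)$ agrees with the one described in the paragraph preceding the proposition by the standard balancing argument that $\Ext$ can be computed either from a projective resolution of $A$ or an injective resolution of $B$.

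The crux of the argument, and the one real hypothesis to verify, is the Grothendieck acyclicity condition: $\Hom_K(A,-)$ must carry $KG$-injectives to $\Hom_{KG}(K,-)$-acyclic objects. To see this, let $I$ be a $KG$-injective module and choose a $KG$-projective resolution $Q_\bullet\to K$. Since $KG$ is $K$-free, each $Q_n$ is $K$-flat, so the homology of $Q_\bullet\otimes_K A$ computes $\Tor^K_*(K,A)$, which equals $A$ in degree zero and vanishes above; hence $Q_\bullet\otimes_K A\to A$ is exact, with the diagonal $G$-action on each tensor factor. Applying $\Hom_{KG}(-,I)$ preserves this exactness because $I$ is $KG$-injective, and the tensor-hom adjunction identifies the resulting complex with $\Hom_{KG}(Q_\bullet,\Hom_K(A,I))$. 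Therefore $\Ext^p_{KG}(K,\Hom_K(A,I))=0$ for $p\geq 1$, as required; the $H^0$ reading of this identification recovers the composition identity for $\Hom$ as well.

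With these ingredients in place, the Grothendieck spectral sequence for the composition of $\Hom_K(A,-)$ and $\Hom_{KG}(K,-)$ applied to an injective $KG$-resolution of $B$ delivers the claimed convergent spectral sequence $E_2^{p,q}=\Ext^p_{KG}(K,\Ext^q_K(A,B))\Rightarrow \Ext^{p+q}_{KG}(A,B)$. The only substantive step is the acyclicity check above; the rest is bookkeeping about how freeness of $KG$ over $K$ makes injective and projective resolutions in the two categories interact compatibly.
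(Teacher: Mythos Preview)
Your argument is correct. The acyclicity verification is the key step and you handle it cleanly: the point that $Q_\bullet\otimes_K A\to A$ is exact because each $Q_n$ is $K$-flat, together with the tensor--hom adjunction for the diagonal and conjugation actions, yields $\Ext^p_{KG}(K,\Hom_K(A,I))=0$ for $p\geq 1$ exactly as you say.

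The paper takes a different route. Rather than invoke the Grothendieck spectral sequence, it builds the double complex $C^{p,q}=\Hom_{KG}(P_p\otimes_K Q_q,B)$ directly from $KG$-projective resolutions $P_\bullet\to A$ and $Q_\bullet\to K$, and computes the two spectral sequences of the double complex by hand: one filtration collapses on the $E_2$-page because $K$ is $K$-projective, identifying the total cohomology with $\Ext^{\ast}_{KG}(A,B)$; the other filtration produces the stated $E_2$-term. Your approach packages this into the Grothendieck machinery (at the cost of needing $KG$-injectives and the acyclicity check), whereas the paper's approach is more bare-hands and stays entirely on the projective side, avoiding any appeal to a named theorem. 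Both are standard; the paper's version is slightly more self-contained, while yours makes the structural reason for the spectral sequence more transparent.
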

\begin{proof}
Let $P_\bullet\to A$ and $Q_\bullet\to K$ be free resolutions over $KG$.  Define a double cochain complex $C^{p,q}=\Hom_{KG}(P_p\otimes_k Q_q,B)$ with the obvious boundary maps.  We shall use the spectral sequence of a double complex~\cite{Rotmanhom}.  Note that $C^{p,\bullet} \cong \Hom_{KG}(P_p,\Hom_K(Q_\bullet,B))$.  Since $P_p$ is projective, we see that $H^q(C^{p,\bullet})\cong \Hom_{KG}(P_p, \Ext_K^q(K,B))$.  Since $K$ is a projective $K$-module, we see that this is nonzero only when $q=0$, in which case we obtain $\Hom_{KG}(P_p,B)$.
It follows that the spectral sequence in this direction collapses on the second page, yielding that $n^{th}$-cohomology of the total complex is $H^n(\Hom_{KG}(P_\bullet, B))=\Ext^n_{KG}(A,B)$.

 On the other hand, $C^{\bullet, p} \cong \Hom_{KG}(Q_p,\Hom_K(P_\bullet, B))$.  Since $Q_p$ is projective, we see that \[H^q(C^{\bullet, p})\cong \Hom_{KG}(Q_p,H^q(\Hom_K(P_\bullet, B))) = \Hom_{KG}(Q_p,\Ext^q_K(A,B)).\]  Thus the second page of the spectral sequence in this direction is given by $\Ext_{KG}^p(K,\Ext^q_{KG}(A,B))$, as required.
\end{proof}

The following consequence is folklore.

\begin{Cor}\label{c.group.gldim}
Let $K$ be a commutative ring with unit and $G$ a group.  Then $\pdim_{KG} K,\gldim K\leq \gldim KG\leq \gldim K+\pdim_{KG} K$.
\end{Cor}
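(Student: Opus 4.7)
The plan is to handle the bounds separately, with the spectral sequence of Proposition~\ref{p:spseq} doing the heavy lifting for the upper bound and an induction-with-adjunction argument giving the lower bounds. The inequality $\pdim_{KG} K \leq \gldim KG$ is immediate from the definition of global dimension, so only $\gldim K \leq \gldim KG$ and the upper bound require work.

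To prove $\gldim K \leq \gldim KG$, I would pass through the induction functor $KG \otimes_K (-)$ from $K$-modules to $KG$-modules. Since $KG$ is free (hence flat) as a right $K$-module, this functor is exact, and since it takes $K$ to $KG$ and commutes with direct sums, it sends projective $K$-modules to projective $KG$-modules. Thus if $P_\bullet \to M$ is any projective $K$-resolution of a $K$-module $M$, then $KG \otimes_K P_\bullet \to KG \otimes_K M$ is a projective $KG$-resolution. The tensor-hom adjunction $\Hom_{KG}(KG \otimes_K M, N) \cong \Hom_K(M, N)$, natural in the $KG$-module $N$, then upgrades to $\Ext^n_{KG}(KG \otimes_K M, N) \cong \Ext^n_K(M, N)$. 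Specializing $N$ to be any $K$-module equipped with the trivial $G$-action shows that $\Ext^n_K(M, N) = 0$ for all $n > \gldim KG$, so $\gldim K \leq \gldim KG$ by Lemma~\ref{l:brown}.

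For the upper bound, assume that $r := \gldim K$ and $s := \pdim_{KG} K$ are both finite (otherwise the inequality is vacuous) and feed this into the spectral sequence of Proposition~\ref{p:spseq}. For any $KG$-modules $A, B$ we have
\[E_2^{p,q} = \Ext^p_{KG}(K, \Ext^q_K(A,B)) \Rightarrow \Ext^{p+q}_{KG}(A, B).\]
Whenever $p + q > r + s$ one must have either $p > s$, in which case $\Ext^p_{KG}(K, -) = 0$ by Lemma~\ref{l:brown}, or $q > r$, in which case $\Ext^q_K(A, B) = 0$. Either way $E_2^{p,q} = 0$, so the abutment $\Ext^{p+q}_{KG}(A, B)$ vanishes, and a final application of Lemma~\ref{l:brown} yields $\gldim KG \leq r + s$. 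I expect no real obstacle here; the only substantive move is recognizing that $\gldim K \leq \gldim KG$ must be routed through the induced module $KG \otimes_K M$ rather than via any direct comparison of resolutions, since a bare $K$-module does not come with a preferred $KG$-structure.
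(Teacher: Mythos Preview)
Your argument is correct, and the upper-bound half is identical to the paper's. For the lower bound $\gldim K\leq \gldim KG$, however, the paper takes a more direct route than the one you propose: it simply endows an arbitrary $K$-module $A$ with the trivial $G$-action, takes any projective $KG$-resolution of that module, and observes that since $KG$ is $K$-free, every projective $KG$-module is also $K$-projective, so the same resolution witnesses $\pdim_K A\leq \pdim_{KG} A\leq \gldim KG$. Your induction/adjunction argument is perfectly valid too, but your closing remark that ``a bare $K$-module does not come with a preferred $KG$-structure'' is mistaken: the trivial $G$-action always supplies one, and that is exactly what the paper exploits.
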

\begin{proof}
Let $A$ be a $K$-module.  Since $KG$ is a free $K$-module, any projective $KG$-resolution of the trivial $KG$-module $A$ is a $K$-projective resolution of $A$, and so $\pdim_K A\leq \pdim_{KG} A\leq \gldim KG$.  The first inequality follows.  The second inequality follows from Proposition~\ref{p:spseq}, since if $p+q>\gldim K+\pdim_{KG} K$, then $\Ext_{KG}^p(K,\Ext^q_K(A,B))=0$, as either $p>\pdim_{KG} K$ or $q>\gldim K$.  The result follows.
\end{proof}

The following is well known, but we give a proof for completeness.

\begin{Cor}\label{c:group.case}
Let $G$ be a finite group and $K$ a commutative ring.  If $|G|$ is a unit in $K$, then $\gldim KG=\gldim K$, otherwise $\gldim KG=\infty$.
\end{Cor}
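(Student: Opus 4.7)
The plan is to combine Corollary \ref{c.group.gldim}, which sandwiches $\gldim KG$ between $\gldim K$ and $\gldim K + \pdim_{KG} K$, with an explicit computation of $\pdim_{KG} K$ in each case.

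If $|G|$ is a unit in $K$, I would verify that $K$ is $KG$-projective by the classical Maschke averaging argument: the element $e = \tfrac{1}{|G|}\sum_{g \in G} g \in KG$ is idempotent and satisfies $ge = e$ for all $g \in G$, so the map $K \to KG$ sending $1 \mapsto e$ is a $KG$-linear section of the augmentation. Hence $K$ is a direct summand of $KG$, giving $\pdim_{KG} K = 0$ and therefore $\gldim KG = \gldim K$ by the sandwich above.

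If $|G|$ is not a unit in $K$, I would pick a prime $p \mid |G|$ that is not invertible in $K$ (so $pK \subsetneq K$) and a cyclic subgroup $H \leq G$ of order $p$. Since $KG$ is a free left $KH$-module (decomposing $G$ into right $H$-cosets), every projective $KG$-resolution of $K$ remains projective over $KH$, hence $\pdim_{KH} K \leq \pdim_{KG} K$. For $H = C_p = \langle t \rangle$, the standard $2$-periodic $\mathbb ZC_p$-resolution tensors up to give an exact sequence
\[\cdots \to KC_p \xrightarrow{N} KC_p \xrightarrow{t-1} KC_p \xrightarrow{N} KC_p \xrightarrow{t-1} KC_p \to K \to 0,\]
where $N = 1 + t + \cdots + t^{p-1}$ (exactness is preserved under $-\otimes_{\mathbb Z} K$ because $\mathbb Z C_p$ is $\mathbb Z$-free). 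Applying $\Hom_{KC_p}(-,K)$ with $K$ the trivial module collapses the differentials to multiplication by $0$ and by $p$ alternately, giving $\Ext^{2n}_{KC_p}(K,K) \cong K/pK$, which is nonzero for every $n \geq 1$. Thus $\pdim_{KC_p} K = \infty$, forcing $\pdim_{KG} K = \infty$ and $\gldim KG = \infty$.

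The main subtlety is only to ensure that the classical tools (Maschke averaging and the periodic resolution for cyclic groups) genuinely work over an arbitrary commutative ring; this is exactly where the hypotheses $|G|^{-1} \in K$ (respectively $pK \neq K$) enter.
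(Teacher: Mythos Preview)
Your proof is correct and follows essentially the same strategy as the paper: use the Maschke idempotent together with Corollary~\ref{c.group.gldim} when $|G|$ is invertible, and reduce to a cyclic $p$-subgroup via the periodic resolution when it is not. The only difference is in the bookkeeping for the second case: the paper invokes Shapiro's lemma to compute $\Tor^{KG}_n(K,KG\otimes_{KC}K)\cong H_n(C,K)$ and appeals to Proposition~\ref{p:indep.of.base}, whereas you use the more elementary observation that $KG$ is free over $KH$, so that $\pdim_{KH}K\leq \pdim_{KG}K$, and then compute $\Ext^{2n}_{KC_p}(K,K)\cong K/pK$ directly. Your route is slightly more self-contained (no external citation for Shapiro), while the paper's route keeps the argument on the $\Tor$/homology side used elsewhere in the section; the two are dual implementations of the same reduction.
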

\begin{proof}
If $|G|$ is a unit in $K$, then $e=\frac{1}{|G|}\sum_{g\in G}g$ is an idempotent with $KGe\cong K$.  Thus $K$ is a projective $KG$-module and the equality follows from Corollary~\ref{c.group.gldim}.  If $|G|$ is not a unit in $K$, then we can find a prime divisor $p$ of $|G|$ which is not a unit in $K$.  Let $C$ be a cyclic subgroup of $G$ of order $p$.  By Shapiro's lemma~\cite[Proposition~III.6.2]{Browncohomology}, we have that $H_n(C,K)\cong H_n(G,KG\otimes_{KC} K)$.  Now it is well known~\cite[Section~III.1, page 58]{Browncohomology} that $H_n(C,K) = K/pK$ for $n$ odd.  Since $p$ is not a unit in $K$, we must have that $pK\neq K$.  It follows from Proposition~\ref{p:indep.of.base} that $\Tor^{KG}_n(K,KG\otimes_{KC} K)$ is nonzero for all odd numbers $n$, and hence $\pdim_{KG} K=\infty$.  Therefore, $\gldim KG=\infty$.
\end{proof}

The following lemma shows that a $0$-simple $0$-minimal left ideal is homological.

\begin{Lemma}\label{l:min.ideal}
Let $M$ be a finite monoid with zero and let $I$ be a $0$-simple $0$-minimal ideal.  Then $K_{\mathbb 0}I=K_{\mathbb 0}[MeM]$ for an idempotent $e\in I$, and $K_{\mathbb 0}I$ is a projective left $K_{\mathbb 0}M$-module for any commutative ring $K$.
\end{Lemma}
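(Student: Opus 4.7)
The plan has two parts. For the first assertion, I would note that since $I$ is finite and $0$-simple, it contains a nonzero idempotent $e$. Then $MeM$ is a two-sided ideal of $M$ contained in $I$ and containing $e\neq 0$, so by $0$-minimality of $I$ one must have $MeM=I$. Hence $K_{\mathbb 0}I=K_{\mathbb 0}[MeM]$ as submodules of $K_{\mathbb 0}M$.

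For projectivity, the strategy is to decompose $K_{\mathbb 0}I$ as an internal direct sum of principal left ideals of $K_{\mathbb 0}M$ generated by idempotents. I would invoke the preliminary fact already recalled in the semigroup background: the nonzero principal left ideals of $I$ are minimal nonzero left ideals of $M$, any two intersect only in $\{0\}$, and each is generated by an idempotent. Choosing one such idempotent $e_i$ per $\mathscr L$-class of $I\setminus\{0\}$, we obtain a disjoint partition
\[
I\setminus\{0\}=\bigsqcup_{i=1}^k(Me_i\setminus\{0\}).
\]

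Because $M$ is a monoid, $K_{\mathbb 0}M$ is a unital ring (with identity the class of $1_M$), and each $e_i$ is an idempotent in this ring. The standard observation that $Re$ is projective over a unital ring $R$ for any idempotent $e\in R$ then shows that $K_{\mathbb 0}M\cdot e_i$ is a projective left $K_{\mathbb 0}M$-module, with $K$-basis $Me_i\setminus\{0\}$. The natural inclusions $K_{\mathbb 0}M e_i\hookrightarrow K_{\mathbb 0}I$ assemble into a $K_{\mathbb 0}M$-linear map $\bigoplus_{i=1}^k K_{\mathbb 0}M e_i\to K_{\mathbb 0}I$, and the displayed partition exhibits it as a bijection of $K$-bases. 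So $K_{\mathbb 0}I$ is a direct sum of projectives and hence projective.

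The hard part will be essentially nonexistent: all the relevant structural content about $0$-minimal $0$-simple ideals has been imported in the preliminaries, and the projectivity argument reduces to the standard fact about idempotents in unital rings. The only subtlety worth flagging is that unitality of $K_{\mathbb 0}M$, which is what lets one conclude $K_{\mathbb 0}M\cdot e_i$ is projective, really does use that $M$ is a monoid rather than just a semigroup.
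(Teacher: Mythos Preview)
Your proposal is correct and follows essentially the same approach as the paper: both arguments pick a nonzero idempotent $e\in I$ and use $0$-minimality to get $MeM=I$, then decompose $I\setminus\{0\}$ as a disjoint union of the sets $Me_i\setminus\{0\}$ over idempotent representatives of the $\mathscr L$-classes, yielding $K_{\mathbb 0}I=\bigoplus_i K_{\mathbb 0}Me_i$. Your write-up is slightly more explicit about why $K_{\mathbb 0}Me_i$ is projective (unitality of $K_{\mathbb 0}M$), but the substance is identical.
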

\begin{proof}
If $e$ is any nonzero idempotent of $I$, then $MeM=I$ by $0$-minimality.
Let $f_1,\ldots, f_r$ be idempotent generators of the distinct nonzero principal left ideals of $I$.  Then $I\setminus \{0\}=\bigsqcup_{i=1}^r Mf_i\setminus \{0\}$.   It follows that $K_{\mathbb 0}I=\bigoplus_{i=1}^r K_{\mathbb 0}Mf_i$, and hence is projective.
\end{proof}

We now prove our main result for this section, generalizing a result of Nico~\cite{Nico1} for the case that $K$ is a field.

\begin{Thm}\label{t:nico.gen}
Let $M$ be a finite nontrivial regular monoid with zero and let \[0=Me_nM\subsetneq \cdots\subsetneq Me_1M\subsetneq Me_0M=M\] be a principal series for $M$.  Let $K$ be a commutative ring with unit.  If $|G_{e_i}|$ is not a unit in $K$ for some $i$, then $\gldim K_{\mathbb 0}M=\infty$.  Otherwise, $\gldim K_{\mathbb 0}M\leq 2(n-1)+n\cdot \gldim K$.
\end{Thm}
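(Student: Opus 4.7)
The strategy is to apply Corollary~\ref{c:heredity.chain} to $R=K_{\mathbb 0}M$ with the idempotents $e_0=1,e_1,\ldots,e_n=0$ from the given principal series, so that $I_j=Re_jR=K_{\mathbb 0}[Me_jM]$ gives $0=I_n\subsetneq\cdots\subsetneq I_0=R$. Once this set-up is justified, both conclusions fall out immediately: the lower bound $\gldim(e_jRe_j/e_jI_{j+1}e_j)\leq\gldim R$ handles the infinity case, while the upper bound $\gldim R\leq\sum_{j=0}^{n-1}\gldim(e_jRe_j/e_jI_{j+1}e_j)+2(n-1)$ handles the finite case, provided each corner quotient is identified with $KG_{e_j}$ and Corollary~\ref{c:group.case} is invoked.

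The first substantive step is to verify the hypothesis that $I_j/I_{j+1}$ is a projective left $R/I_{j+1}$-module. Since contracted algebras commute with Rees quotients, $R/I_{j+1}\cong K_{\mathbb 0}[M/Me_{j+1}M]$ and $I_j/I_{j+1}\cong K_{\mathbb 0}[Me_jM/Me_{j+1}M]$. Because $M$ is regular, $Me_jM/Me_{j+1}M$ is a $0$-minimal ideal of $M/Me_{j+1}M$ generated by the nonzero idempotent image of $e_j$, and hence is $0$-simple. Lemma~\ref{l:min.ideal}, applied inside $M/Me_{j+1}M$, then delivers precisely the required projectivity.

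Next I would identify $e_jRe_j/e_jI_{j+1}e_j$ with $KG_{e_j}$. Since taking corners commutes with quotients, this algebra equals $K_{\mathbb 0}[e_j(M/Me_{j+1}M)e_j]$. Because $e_jMe_j\subseteq Me_jM$, the local monoid $e_j(M/Me_{j+1}M)e_j$ is contained in $e_j(I_j/I_{j+1})e_j$, and the standard identity $eIe=G_e\cup\{0\}$ for a nonzero idempotent $e$ in a finite $0$-simple semigroup $I$ gives $e_j(M/Me_{j+1}M)e_j=G_{e_j}\cup\{0\}$; hence $e_jRe_j/e_jI_{j+1}e_j\cong KG_{e_j}$. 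By Corollary~\ref{c:group.case}, $\gldim KG_{e_j}=\gldim K$ when $|G_{e_j}|$ is a unit in $K$, and is $\infty$ otherwise. Substituting into Corollary~\ref{c:heredity.chain} then yields the infinity case from the lower bound, and the finite bound $n\cdot\gldim K+2(n-1)$ from the upper bound. The one genuine conceptual obstacle is recognizing that regularity combined with Lemma~\ref{l:min.ideal} supplies exactly the projectivity hypothesis required to feed the principal series into Corollary~\ref{c:heredity.chain}; once this is in place, the rest is bookkeeping.
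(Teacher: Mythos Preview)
Your proposal is correct and follows essentially the same route as the paper: set $I_j=K_{\mathbb 0}[Me_jM]$, invoke Lemma~\ref{l:min.ideal} on the $0$-minimal ideal of $M/Me_{j+1}M$ to verify the projectivity hypothesis of Corollary~\ref{c:heredity.chain}, identify $e_jRe_j/e_jI_{j+1}e_j\cong KG_{e_j}$, and then read off both conclusions via Corollary~\ref{c:group.case}. Your write-up is in fact more detailed than the paper's on the identification of the corner quotient with the group algebra, but the underlying argument is the same.
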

\begin{proof}
Let $I_j = K_{\mathbb 0}Me_jM$.  Note that $e_jKMe_j/e_jI_{j+1}e_j\cong KG_{e_j}$.  The result will then follow from Corollary~\ref{c:group.case} and Corollary~\ref{c:heredity.chain} once we prove that $I_j/I_{j+1}$ is projective as a $K_{\mathbb 0}[M/I_{j+1}]$-module.  But this follows from Lemma~\ref{l:min.ideal} applied to the $0$-minimal ideal $I_j/I_{j+1}$ of $M/I_{j+1}$.
\end{proof}

Theorem~\ref{t:nico.gen} immediately implies Theorem~C from the introduction.

\begin{Rmk}
As in~\cite{Nico2}, one can improve the upper bound in Theorem~\ref{t:nico.gen} to replace $n$ by the length of the longest chain of principal ideals in $M$.  We leave the details to the interested reader.
\end{Rmk}

\begin{Cor}\label{c:finite.gldim}
Let $M$ be a finite aperiodic regular monoid.  Then $\gldim \mathbb ZM$ is finite.  In fact, it is bounded above by $3n-2$ where $n$ is the number of $\mathscr J$-classes of $M$.  Consequently, $H_n(M,\mathbb Z)=0$ for $n$ sufficiently large.  Moreover, $\mathbb Z$ has a finite projective resolution by finitely generated projective $\mathbb ZM$-modules.
\end{Cor}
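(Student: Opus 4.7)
The plan is to apply Theorem~\ref{t:nico.gen} to $M^{\mathbb 0}$ with $K=\mathbb Z$, then extract the remaining consequences by standard homological algebra. Adjoining a zero preserves both regularity and aperiodicity, so $M^{\mathbb 0}$ is a finite nontrivial regular monoid whose maximal subgroups are all trivial. If $M$ has $n$ $\mathscr J$-classes, then $M^{\mathbb 0}$ has $n+1$ of them, namely those of $M$ together with $\{0\}$, and by regularity (see the preliminaries) we may pick a principal series $0=M^{\mathbb 0}e_nM^{\mathbb 0}\subsetneq\cdots\subsetneq M^{\mathbb 0}e_0M^{\mathbb 0}=M^{\mathbb 0}$ whose ideals are idempotent-generated.

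Since $|G_{e_j}|=1$ is a unit in $\mathbb Z$ for every $j$ and $\gldim\mathbb Z=1$, Theorem~\ref{t:nico.gen} yields
\[\gldim \mathbb Z_{\mathbb 0}[M^{\mathbb 0}]\leq 2(n-1)+n\cdot\gldim\mathbb Z=3n-2.\]
Using the identification $\mathbb ZM\cong \mathbb Z_{\mathbb 0}[M^{\mathbb 0}]$ from the preliminaries, this gives $\gldim\mathbb ZM\leq 3n-2$, and Proposition~\ref{p:indep.of.base} shows that $H_k(M,\mathbb Z)\cong\Tor^{\mathbb ZM}_k(\mathbb Z,\mathbb Z)$ vanishes once $k>3n-2$.

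For the final claim, note that $\mathbb ZM$ is left noetherian since it is finitely generated as a $\mathbb Z$-module. One builds an exact sequence $\cdots\to F_1\to F_0\to \mathbb Z\to 0$ by finitely generated free $\mathbb ZM$-modules inductively; the successive kernels remain finitely generated by noetherianity. Since $\pdim_{\mathbb ZM}\mathbb Z\leq 3n-2$, Lemma~\ref{l:brown} forces the kernel $K$ of $F_{3n-3}\to F_{3n-4}$ to be projective (and still finitely generated, as a submodule of $F_{3n-3}$), giving the desired finite resolution $0\to K\to F_{3n-3}\to\cdots\to F_0\to\mathbb Z\to 0$ by finitely generated projective $\mathbb ZM$-modules. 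No genuine obstacle arises beyond keeping track of the shift in $\mathscr J$-class count upon adjoining a zero and passing between $\mathbb ZM$ and the contracted algebra $\mathbb Z_{\mathbb 0}[M^{\mathbb 0}]$.
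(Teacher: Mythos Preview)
Your proof is correct and follows essentially the same approach as the paper's: apply Theorem~\ref{t:nico.gen} (implicitly via the identification $\mathbb ZM\cong \mathbb Z_{\mathbb 0}[M^{\mathbb 0}]$) to get the bound $3n-2$, then use noetherianity of $\mathbb ZM$ together with Lemma~\ref{l:brown} to truncate a resolution by finitely generated projectives. The paper's proof is terser---it simply says the global dimension bound is ``immediate'' from Theorem~\ref{t:nico.gen} and truncates at $k=\pdim_{\mathbb ZM}\mathbb Z$ rather than at $3n-2$---but you have filled in exactly the details that were being suppressed.
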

\begin{proof}
The first statement is immediate from Theorem~\ref{t:nico.gen}.  Let $k=\pdim_{\mathbb ZM}\mathbb Z$. Note that since $M$ is finite, $\mathbb ZM$ is left noetherian, and hence every finitely generated $\mathbb ZM$-module has a resolution by finitely generated projective modules.  Thus there is a projective resolution $P_\bullet\to \mathbb Z$ with $P_n$ finitely generated for all $n\geq 0$.  Then we have an exact sequence $0\to K\to P_{k-1}\to \cdots \to P_0\to \mathbb Z\to 0$ with $K$ finitely generated, by noetherianity, and projective by Lemma~\ref{l:brown} as $k=\pdim_{\mathbb ZM}\mathbb Z$.  This completes the proof.
\end{proof}

Since every finite band is an aperiodic regular monoid, the above result answers Question~8.4(b)--(c) of~\cite{homotopysmall}.  The next result answers Question~8.6 from~\cite{homotopysmall}.  We remark that if $M$ is a finite monoid, then $H_n(M,A)$ is a finitely generated abelian group for any finitely generated abelian group $A$.  Indeed, since $\mathbb ZM$ is noetherian, $A$ has a resolution $F_\bullet\to A$ by finitely generated free modules.  Then $\mathbb Z\otimes F_\bullet$ is a chain complex of finitely generated free abelian groups.  Therefore, $H_n(M,A) = H_n(\mathbb Z\otimes F_\bullet)$ is finitely generated.

\begin{Cor}\label{c:torsion.regular}
Let $M$ be a finite regular monoid.  Then $H_n(M,\mathbb Z)$ is finite for all $n$ sufficiently large.
\end{Cor}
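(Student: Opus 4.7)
The plan is to invert the orders of the maximal subgroups of $M$ so that Theorem~\ref{t:nico.gen} applies, and then to descend back to $\mathbb Z$-coefficients via flatness. By the remark immediately preceding the corollary, $H_n(M,\mathbb Z)$ is a finitely generated abelian group for every $n$, so it suffices to prove that $H_n(M,\mathbb Z)$ is torsion for all sufficiently large $n$.

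Let $N$ be the least common multiple of the orders of the maximal subgroups of $M$, and set $K=\mathbb Z[1/N]$. Then $K$ is a principal ideal domain, so $\gldim K\leq 1$, and every $|G_e|$ is by construction a unit in $K$. Applying Theorem~\ref{t:nico.gen} to $M^{\mathbb 0}$ (whose nontrivial maximal subgroups coincide with those of $M$, with the subgroup at $0$ being trivial) gives $\gldim K_{\mathbb 0}[M^{\mathbb 0}]<\infty$; since $KM\cong K_{\mathbb 0}[M^{\mathbb 0}]$, this yields $\gldim KM<\infty$ and hence $\pdim_{KM}K<\infty$. By Proposition~\ref{p:indep.of.base}, it follows that $H_n(M,K)=0$ for all sufficiently large $n$.

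Finally, I compare $H_n(M,K)$ with $H_n(M,\mathbb Z)$. Taking a resolution $P_\bullet\to\mathbb Z$ of the trivial right $\mathbb ZM$-module by free $\mathbb ZM$-modules and using that $K$ is $\mathbb Z$-flat, we have
\[H_n(M,K)\cong H_n(P_\bullet\otimes_{\mathbb ZM}K)\cong H_n\bigl((P_\bullet\otimes_{\mathbb ZM}\mathbb Z)\otimes_{\mathbb Z}K\bigr)\cong H_n(M,\mathbb Z)\otimes_{\mathbb Z}K,\]
so $H_n(M,\mathbb Z)\otimes_{\mathbb Z}\mathbb Z[1/N]=0$ for all sufficiently large $n$. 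Writing $H_n(M,\mathbb Z)\cong \mathbb Z^{r_n}\oplus T_n$ with $T_n$ a finite torsion group, this forces $r_n=0$, whence $H_n(M,\mathbb Z)=T_n$ is finite. There is no serious obstacle in this argument; the only care needed is the flat base-change identification in the displayed line and the small reduction to the monoid-with-zero case required to invoke Theorem~\ref{t:nico.gen}.
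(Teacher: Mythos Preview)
Your proof is correct and follows essentially the same strategy as the paper's: pass to a flat $\mathbb Z$-algebra in which the maximal subgroup orders are invertible, invoke the finite global dimension result (Theorem~\ref{t:nico.gen} via Proposition~\ref{p:indep.of.base}), and descend by flat base change. The paper simply uses $\mathbb Q$ in place of your $\mathbb Z[1/N]$, which slightly streamlines the argument; your choice has the minor bonus of showing that the torsion in $H_n(M,\mathbb Z)$ for large $n$ is supported at primes dividing $N$.
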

\begin{proof}
Since $\mathbb Q$ is a flat $\mathbb Z$-module, we have that $H_n(M,\mathbb Q)\cong H_n(M,\mathbb Z)\otimes_{\mathbb Z}\mathbb Q$ by the universal coefficient theorem.  It thus suffices to show that $H_n(M,\mathbb Q)=0$ for $n$ sufficiently large.  But $H_n(M,\mathbb Q)\cong \Tor_n^{\mathbb QM}(\mathbb Q,\mathbb Q)$ by Proposition~\ref{p:indep.of.base}, and $\mathbb QM$ has finite global dimension by the special case of Theorem~\ref{t:nico.gen} from~\cite{nico}.  Therefore, $\Tor_n^{\mathbb QM}(\mathbb Q,\mathbb Q)=0$ for $n$ sufficiently large.  We conclude that $H_n(M,\mathbb Z)\otimes_{\mathbb Z}\mathbb Q\cong H_n(M,\mathbb Q)\cong \Tor_n^{\mathbb QM}(\mathbb Q,\mathbb Q)=0$, and hence $H_n(M,\mathbb Z)$ is a finite group, for $n$ sufficiently large.
\end{proof}

We now present our algebraic proof that $BJ(S)$ has the homology of the suspension of $BS$ for a monoid $S$.

\begin{Prop}\label{p:susp.homology}
Let $S$ be a finite monoid.  Then $BJ(S)$ is simply connected and, for $n\geq 2$, $H_n(BJ(S))\cong H_{n-1}(BS)$.  In particular, if $BS$ is a Moore space of type $M(A,n)$, them $BJ(S)$ is a Moore space of type $M(A,n+1)$.
\end{Prop}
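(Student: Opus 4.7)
The plan is to compute $H_n(BJ(S)) = H_n(J(S),\mathbb{Z})$ algebraically via Theorem~\ref{t:top.interp.homology}, exploiting two structural features of $J(S)$: writing $K = \{1,2\}\times S$, the left ideal $\mathbb{Z}K$ is a projective $\mathbb{Z}J(S)$-module because $K = \bigsqcup_{s\in S} J(S)(1,s)$ with each $(1,s)$ idempotent, while $\mathbb{Z}K$ is simultaneously free as a right $\mathbb{Z}S$-module with basis $\{(1,1),(2,1)\}$, and $\mathbb{Z}J(S)$ is free of rank $3$ as a right $\mathbb{Z}S$-module with basis $\{1,(1,1),(2,1)\}$. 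For simple connectedness, the minimal ideal $K$ is a rectangular band whose maximal subgroups are trivial, so $G(J(S))$ is trivial and $\pi_1(BJ(S))\cong G(J(S))$ is trivial.

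Setting $e = (1,1)$, the first step is to verify exactness of the $\mathbb{Z}J(S)$-module sequence
\[
0 \to \mathbb{Z}_S \xrightarrow{\alpha} \mathbb{Z}J(S)e \xrightarrow{\beta} \mathbb{Z}_J \to 0,
\]
where $\mathbb{Z}_J$ is the trivial $\mathbb{Z}J(S)$-module, $\mathbb{Z}_S$ is the inflation along $\mathbb{Z}J(S)\twoheadrightarrow\mathbb{Z}S$ of the trivial $\mathbb{Z}S$-module (so elements of $K$ act as zero), $\alpha(1) = (1,1)-(2,1)$, and $\beta$ is the augmentation $(i,1)\mapsto 1$. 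Projectivity of $\mathbb{Z}J(S)e$ together with the computation $\mathbb{Z}_J\otimes_{\mathbb{Z}J(S)}\mathbb{Z}J(S)e\cong\mathbb{Z}$ will then let me dimension-shift through the long exact sequence for $\Tor^{\mathbb{Z}J(S)}_\bullet(\mathbb{Z}_J,-)$ to obtain
\[
H_n(J(S)) = \Tor^{\mathbb{Z}J(S)}_n(\mathbb{Z}_J,\mathbb{Z}_J) \cong \Tor^{\mathbb{Z}J(S)}_{n-1}(\mathbb{Z}_J, \mathbb{Z}_S)
\]
for $n\geq 2$, together with $H_1(J(S)) = 0$.

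The heart of the argument is identifying $\Tor^{\mathbb{Z}J(S)}_{n-1}(\mathbb{Z}_J, \mathbb{Z}_S)$ with $H_{n-1}(S)$. Starting from a projective $\mathbb{Z}S$-resolution $P_\bullet\to\mathbb{Z}$, the bimodule exact sequence $0\to\mathbb{Z}K\to\mathbb{Z}J(S)\to\mathbb{Z}S\to 0$ combined with $\mathbb{Z}S$-flatness of each $P_n$ yields short exact sequences
\[
0 \to \mathbb{Z}K\otimes_{\mathbb{Z}S} P_n \to \mathbb{Z}J(S)\otimes_{\mathbb{Z}S} P_n \to P_n \to 0
\]
whose first two terms are $\mathbb{Z}J(S)$-projective by the right-$\mathbb{Z}S$-freeness of $\mathbb{Z}K$ and $\mathbb{Z}J(S)$. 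Assembling these into a double complex with vertical differentials from $P_\bullet$ and horizontal differentials from $\mathbb{Z}K\hookrightarrow\mathbb{Z}J(S)$, the total complex will be a projective $\mathbb{Z}J(S)$-resolution of $\mathbb{Z}_S$.

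Applying $\mathbb{Z}_J\otimes_{\mathbb{Z}J(S)}-$ produces a double complex of abelian groups with columns $\mathbb{Z}\otimes_{\mathbb{Z}S} P_\bullet$ and $P_\bullet$, using the key computation $\mathbb{Z}_J\otimes_{\mathbb{Z}J(S)}\mathbb{Z}K\cong\mathbb{Z}S$ as a right $\mathbb{Z}S$-module via $1\otimes(1,s)\mapsto s$. The column-filtration spectral sequence then has $E^1$-page with one column computing $H_\bullet(S)$ and another with $\mathbb{Z}$ concentrated in degree zero; the $E^1$-differential in degree zero is the augmentation-induced isomorphism $\mathbb{Z}\to\mathbb{Z}$, forcing the spectral sequence to collapse to $\Tor^{\mathbb{Z}J(S)}_n(\mathbb{Z}_J,\mathbb{Z}_S) \cong H_n(S)$ for $n\geq 1$. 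The main technical obstacle will be the bookkeeping in the double complex, together with verifying that the $E^1$ connecting map in degree zero is indeed the augmentation isomorphism; once that is settled, combining with the dimension shift yields $H_n(BJ(S)) \cong H_{n-1}(S) \cong H_{n-1}(BS)$ for $n\geq 2$, from which the Moore space claim follows immediately.
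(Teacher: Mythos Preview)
Your proof is correct, and its first half coincides with the paper's: simple connectedness from the rectangular-band minimal ideal, and the dimension shift via the short exact sequence $0\to\Inf\mathbb Z\to\mathbb ZJ(S)e\to\mathbb Z\to 0$, reducing the problem to computing $\Tor^{\mathbb ZJ(S)}_{n-1}(\mathbb Z,\Inf\mathbb Z)$.

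The second half is genuinely different. The paper performs a \emph{second} dimension shift using the right-module sequence $0\to\Inf I\to e\,\mathbb ZJ(S)\to\mathbb Z\to 0$ (with $I$ the augmentation ideal of $\mathbb ZS$), then invokes the homological-ideal property of $\mathbb ZK$ established earlier (Proposition~\ref{p:standard.strat} and Lemma~\ref{l:min.ideal}) to identify $\Tor^{\mathbb ZJ(S)}_{\bullet}(\Inf I,\Inf\mathbb Z)\cong\Tor^{\mathbb ZS}_{\bullet}(I,\mathbb Z)$, and finally shifts once more over $\mathbb ZS$. You instead build an explicit projective $\mathbb ZJ(S)$-resolution of $\Inf\mathbb Z$ as the mapping cone of $\mathbb ZK\otimes_{\mathbb ZS}P_\bullet\to\mathbb ZJ(S)\otimes_{\mathbb ZS}P_\bullet$ and read off the answer from the two-column spectral sequence (equivalently, the cone's long exact sequence). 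Your computation $\mathbb Z_J\otimes_{\mathbb ZJ(S)}\mathbb ZK\cong\mathbb ZS$ as right $\mathbb ZS$-modules is the key step, and it is correct: the coinvariants identify $(1,s)$ with $(2,s)$ but not $(1,s)$ with $(1,s')$. The paper's route reuses the machinery built for Theorem~C and avoids any bicomplex; your route is self-contained and amounts to verifying directly, in this instance, the change-of-rings isomorphism that the homological-ideal property would otherwise supply.

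One small correction of attribution: the left $\mathbb ZJ(S)$-projectivity of $\mathbb ZK\otimes_{\mathbb ZS}P_n$ comes from the left projectivity of $\mathbb ZK$ that you noted in your first paragraph (so $\mathbb ZK\otimes_{\mathbb ZS}P_n$ is a summand of a sum of copies of $\mathbb ZK$), not from the right-$\mathbb ZS$-freeness of $\mathbb ZK$; the latter is what guarantees exactness of the tensored short exact sequences.
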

\begin{proof}
Since $J(S)$ has a minimal ideal which is a rectangular band, $\pi_1(BJ(S))$ is trivial.
Let $e=(1,1)$.  Then $K=SeS$ is the minimal ideal of $J(S)$ and $\mathbb ZK$ is a homological ideal of $\mathbb ZJ(S)$ by Proposition~\ref{p:standard.strat} and Lemma~\ref{l:min.ideal} (applied to $J(S)^{\mathbb 0}$).  There is a surjective homomorphism $\mathbb ZJ(S)\to \mathbb ZS$ that is the identity on $S$ and sends $K$ to $0$.  Denote by $\Inf A$ the inflation of a $\mathbb ZS$-module $A$ to $\mathbb ZJ(S)$.  Notice that there is a surjective module homomorphism $\mathbb ZMe\to \mathbb Z$ sending $(1,1)$ and $(2,1)$ to $1$.  The kernel is generated by $(2,1)-(1,1)$ as an abelian group, and as a left $\mathbb ZJ(S)$-module it is isomorphic to $\Inf \mathbb Z$.  Thus we have an exact sequence $0\to \Inf\mathbb Z\to \mathbb ZMe\to \mathbb Z\to 0$.   This yields that $H_n(M,\mathbb Z)\cong H_{n-1}(M,\Inf \mathbb Z)$ for $n\geq 2$ by the long exact sequence in homology as $\mathbb ZMe$ is projective.

 Let $I$ be the augmentation ideal of $\mathbb ZS$, that is, the kernel of the augmentation homomorphism $\mathbb ZS\to \mathbb Z$ given by $s\mapsto 1$.   There is an exact sequence of right $\mathbb ZS$-modules, $0\to I\to \mathbb ZS\to \mathbb Z\to 0$.  Hence $H_n(S,\mathbb Z)\cong H_{n-1}(S,I)$ for $n\geq 2$ and there is an exact sequence $0\to H_1(S,\mathbb Z)\to I\otimes_{\mathbb ZS}\mathbb Z \to \mathbb Z\to \mathbb Z\to 0$, whence $H_1(S)\cong I\otimes_{\mathbb ZS}\mathbb Z$.

 Now consider the exact sequence of right $\mathbb ZJ(S)$-modules $0\to R\to e\mathbb ZM\to \mathbb Z\to 0$, where $R$ has basis as an abelian group all differences $(1,s)-(1,1)$ with $s\neq 1$.  Notice that $K$ annihilates $R$ and that as a $\mathbb ZS$-module, $R\cong \Inf I$.  We deduce that $H_{n-1}(M,\Inf\mathbb Z)\cong \Tor^{\mathbb ZJ(S)}_{n-2}(\Inf I,\Inf \mathbb Z)$ for $n\geq 3$, and that there is an exact sequence
 \[0\to H_1(M,\Inf\mathbb Z)\to \Inf I\otimes_{\mathbb ZJ(S)} \Inf \mathbb Z\to e\Inf \mathbb Z\to \mathbb Z\otimes_{\mathbb ZJ(S)}\Inf \mathbb Z\to 0.\]
 Since $\mathbb ZK$ is a homological ideal, we deduce that, for $n\geq 3$, \[H_n(M)\cong \Tor^{\mathbb ZJ(S)}_{n-2}(\Inf I,\Inf \mathbb Z)\cong \Tor^{\mathbb ZS}_{n-2}(I,\mathbb Z)=H_{n-2}(S,I)\cong H_{n-1}(S).\]  Since $e\Inf\mathbb Z=0$, it follows that for $n=2$, $H_2(M)\cong H_1(M,\Inf\mathbb Z)\cong \Inf I\otimes_{\mathbb ZJ(S)} \Inf \mathbb Z\cong I\otimes_{\mathbb ZS} \mathbb Z\cong H_1(S)$.  This completes the proof.
\end{proof}

\bibliographystyle{abbrv}
\bibliography{standard2}

@Preamble{"\def\malce{\mathbin{\hbox{$\bigcirc$\rlap{\kern-7.75pt\raise0,50pt\hbox{${\tt m}$}}}}}"}

@Preamble{"\def\cprime{$'$} "}

@article {McDuff,
    AUTHOR = {McDuff, Dusa},
     TITLE = {On the classifying spaces of discrete monoids},
   JOURNAL = {Topology},
  FJOURNAL = {Topology. An International Journal of Mathematics},
    VOLUME = {18},
      YEAR = {1979},
    NUMBER = {4},
     PAGES = {313--320},
      ISSN = {0040-9383},
   MRCLASS = {55R35 (20M99 58D99)},
  MRNUMBER = {551013},
MRREVIEWER = {J.\ Stasheff},
       DOI = {10.1016/0040-9383(79)90022-3},
       URL = {https://doi.org/10.1016/0040-9383(79)90022-3},
}

@article {quasistrat,
    AUTHOR = {Liu, Shiping and Paquette, Charles},
     TITLE = {Some homological conjectures for quasi-stratified algebras},
   JOURNAL = {J. Algebra},
  FJOURNAL = {Journal of Algebra},
    VOLUME = {301},
      YEAR = {2006},
    NUMBER = {1},
     PAGES = {240--255},
      ISSN = {0021-8693},
   MRCLASS = {16G10 (16E99)},
  MRNUMBER = {2230329},
MRREVIEWER = {Alison E. Parker},
       DOI = {10.1016/j.jalgebra.2005.07.010},
       URL = {https://doi-org.ccny-proxy1.libr.ccny.cuny.edu/10.1016/j.jalgebra.2005.07.010},
}

@Article{ projective,
	author = "J. Rhodes and B. Steinberg",
	title = "Closed subgroups of free profinite monoids are projective profinite groups",
	JOURNAL = "Bull. London Math. Soc.",
	FJOURNAL = "The Bulletin of the London Mathematical Society",
	year = "2008",
	volume = "40",
	number = "3",
	pages = "375--383"
}

@Article{ aperiodic,
	AUTHOR = "Benjamin Steinberg",
	TITLE = "On aperiodic relational morphisms",
	JOURNAL = "Semigroup Forum",
	FJOURNAL = "Semigroup Forum",
	VOLUME = "70",
	YEAR = "2005",
	NUMBER = "1",
	PAGES = "1--43",
	ISSN = "0037-1912",
	CODEN = "SMGFAN",
	MRCLASS = "20M07",
	MRNUMBER = "MR2107191 (2006c:20116)",
	MRREVIEWER = "T. E. Hall"
}

@book {Hatcher,
    AUTHOR = {Hatcher, Allen},
     TITLE = {Algebraic topology},
 PUBLISHER = {Cambridge University Press, Cambridge},
      YEAR = {2002},
     PAGES = {xii+544},
      ISBN = {0-521-79160-X; 0-521-79540-0},
   MRCLASS = {55-01 (55-00)},
  MRNUMBER = {1867354},
MRREVIEWER = {Donald\ W.\ Kahn},
}

@Book{ Arbib,
	AUTHOR = "K. Krohn and J. Rhodes and B. Tilson",
	TITLE = "Algebraic theory of machines, languages, and semigroups",
	SERIES = "Edited by Michael A. Arbib. With a major contribution by Kenneth Krohn and John L. Rhodes",
	NOTE = "Chapters 1, 5--9",
	PUBLISHER = "Academic Press",
	ADDRESS = "New York",
	YEAR = "1968",
	PAGES = "xvi+359",
	MRCLASS = "20.92",
	MRNUMBER = "MR0232875 (38 \#1198)",
	MRREVIEWER = "G. J. Lallement"
}

@Book{ CP,
	AUTHOR = "A. H. Clifford and G. B. Preston",
	TITLE = "The algebraic theory of semigroups. {V}ol. {I}",
	SERIES = "Mathematical Surveys, No. 7",
	PUBLISHER = "American Mathematical Society",
	ADDRESS = "Providence, R.I.",
	YEAR = "1961",
	PAGES = "xv+224",
	MRCLASS = "20.92",
	optMRNUMBER = "MR0132791 (24 \#A2627)",
	MRREVIEWER = "\v{S}t. Schwarz"
}

@Book{ actsbook,
	AUTHOR = "Mati Kilp and Ulrich Knauer and Alexander V. Mikhalev",
	TITLE = "Monoids, acts and categories",
	SERIES = "de Gruyter Expositions in Mathematics",
	VOLUME = "29",
	NOTE = "With applications to wreath products and graphs, A handbook for students and researchers",
	PUBLISHER = "Walter de Gruyter \& Co.",
	ADDRESS = "Berlin",
	YEAR = "2000",
	PAGES = "xviii+529",
	ISBN = "3-11-015248-7",
	MRCLASS = "20M50 (18-02 20M30)",
	MRNUMBER = "MR1751666 (2001b:20113)",
	MRREVIEWER = "Zhong Kui Liu"
}

@article {classifyingspacecompletelysimple,
    AUTHOR = {Steinberg, Benjamin},
     TITLE = {The homology of completely simple semigroups},
   JOURNAL = {Semigroup Forum},
  FJOURNAL = {Semigroup Forum},
    VOLUME = {110},
      YEAR = {2025},
    NUMBER = {3},
     PAGES = {745--761},
      ISSN = {0037-1912,1432-2137},
   MRCLASS = {20M50 (20M05)},
  MRNUMBER = {4920053},
       DOI = {10.1007/s00233-025-10528-z},
       URL = {https://doi-org.ccny-proxy1.libr.ccny.cuny.edu/10.1007/s00233-025-10528-z},
}

@Article{ Nico,
	AUTHOR = "William R. Nico",
	TITLE = "Wreath products and extensions",
	JOURNAL = "Houston J. Math.",
	FJOURNAL = "Houston Journal of Mathematics",
	VOLUME = "9",
	YEAR = "1983",
	NUMBER = "1",
	PAGES = "71--99",
	ISSN = "0362-1588",
	CODEN = "HJMADZ",
	MRCLASS = "18A25 (18A22 20E22)",
	optMRNUMBER = "MR699051 (84f:18002)",
	MRREVIEWER = "V{\'a}clav Koubek"
}

@book {Rotmanhom,
    AUTHOR = {Rotman, Joseph J.},
     TITLE = {An introduction to homological algebra},
    SERIES = {Universitext},
   EDITION = {Second},
 PUBLISHER = {Springer, New York},
      YEAR = {2009},
     PAGES = {xiv+709},
      ISBN = {978-0-387-24527-0},
   MRCLASS = {18Gxx (13Dxx 16Exx 18-01 20J06)},
  MRNUMBER = {2455920},
MRREVIEWER = {Fernando Muro},
       DOI = {10.1007/b98977},
       URL = {https://doi-org.ccny-proxy1.libr.ccny.cuny.edu/10.1007/b98977},
}

@book {GabrielZisman,
    AUTHOR = {Gabriel, P. and Zisman, M.},
     TITLE = {Calculus of fractions and homotopy theory},
    SERIES = {Ergebnisse der Mathematik und ihrer Grenzgebiete, Band 35},
 PUBLISHER = {Springer-Verlag New York, Inc., New York},
      YEAR = {1967},
     PAGES = {x+168},
   MRCLASS = {55.40 (18.00)},
  MRNUMBER = {0210125},
MRREVIEWER = {A. K. Bousfield},
}

@Article{ Nico2,
	AUTHOR = "William R. Nico",
	TITLE = "An improved upper bound for global dimension of semigroup algebras",
	JOURNAL = "Proc. Amer. Math. Soc.",
	FJOURNAL = "Proceedings of the American Mathematical Society",
	VOLUME = "35",
	YEAR = "1972",
	PAGES = "34--36",
	ISSN = "0002-9939",
	MRCLASS = "20M25 (16A60)",
	optMRNUMBER = "MR0296182 (45 \#5243)",
	MRREVIEWER = "J. K. Luedeman"
}

@Article{ Nico1,
	AUTHOR = "William R. Nico",
	TITLE = "Homological dimension in semigroup algebras",
	JOURNAL = "J. Algebra",
	FJOURNAL = "Journal of Algebra",
	VOLUME = "18",
	YEAR = "1971",
	PAGES = "404--413",
	ISSN = "0021-8693",
	MRCLASS = "20.90",
	optMRNUMBER = "MR0277626 (43 \#3359)",
	MRREVIEWER = "J. K. Luedeman"
}

@Article{ CPS2,
	AUTHOR = "Edward Cline and Brian Parshall and Leonard Scott",
	TITLE = "Stratifying endomorphism algebras",
	JOURNAL = "Mem. Amer. Math. Soc.",
	FJOURNAL = "Memoirs of the American Mathematical Society",
	VOLUME = "124",
	YEAR = "1996",
	NUMBER = "591",
	PAGES = "viii+119",
	ISSN = "0065-9266",
	CODEN = "MAMCAU",
	MRCLASS = "16G10 (20C30)",
	optMRNUMBER = "MR1350891 (97h:16012)"
}

@Article{ auslanderidem,
	AUTHOR = "M. Auslander and M. I. Platzeck and G. Todorov",
	TITLE = "Homological theory of idempotent ideals",
	JOURNAL = "Trans. Amer. Math. Soc.",
	FJOURNAL = "Transactions of the American Mathematical Society",
	VOLUME = "332",
	YEAR = "1992",
	NUMBER = "2",
	PAGES = "667--692",
	ISSN = "0002-9947",
	CODEN = "TAMTAM",
	MRCLASS = "16G10 (16D25 16D90)",
	optMRNUMBER = "MR1052903 (92j:16008)",
	MRREVIEWER = "J. Antonio de la Pe{\~n}a"
}

@article {homologicalepi,
    AUTHOR = {Geigle, Werner and Lenzing, Helmut},
     TITLE = {Perpendicular categories with applications to representations
              and sheaves},
   JOURNAL = {J. Algebra},
  FJOURNAL = {Journal of Algebra},
    VOLUME = {144},
      YEAR = {1991},
    NUMBER = {2},
     PAGES = {273--343},
      ISSN = {0021-8693},
   MRCLASS = {16D90 (16G20 18E10 18E30 18E35)},
  MRNUMBER = {1140607},
MRREVIEWER = {Jeremy Rickard},
       DOI = {10.1016/0021-8693(91)90107-J},
       URL = {https://doi-org.ccny-proxy1.libr.ccny.cuny.edu/10.1016/0021-8693(91)90107-J},
}

@Book{ qtheor,
	AUTHOR = "John Rhodes and Benjamin Steinberg",
	TITLE = "The {$q$}-theory of finite semigroups",
	SERIES = "Springer Monographs in Mathematics",
	PUBLISHER = "Springer",
	ADDRESS = "New York",
	YEAR = "2009",
	PAGES = "xxii+666",
	ISBN = "978-0-387-09780-0",
	MRCLASS = "20M07 (06B35 06E15 20F65 20M35)",
	MRNUMBER = "MR2472427"
}

@book {may,
    AUTHOR = {May, J. Peter},
     TITLE = {Simplicial objects in algebraic topology},
    SERIES = {Van Nostrand Mathematical Studies, No. 11},
 PUBLISHER = {D. Van Nostrand Co., Inc., Princeton, N.J.-Toronto,
              Ont.-London},
      YEAR = {1967},
     PAGES = {vi+161},
   MRCLASS = {55.40},
  MRNUMBER = {MR0222892 (36 \#5942)},
MRREVIEWER = {A. K. Bousfield},
}

@book {CartanEilenberg,
    AUTHOR = {Cartan, Henri and Eilenberg, Samuel},
     TITLE = {Homological algebra},
    SERIES = {Princeton Landmarks in Mathematics},
      NOTE = {With an appendix by David A. Buchsbaum,
              Reprint of the 1956 original},
 PUBLISHER = {Princeton University Press},
   ADDRESS = {Princeton, NJ},
      YEAR = {1999},
     PAGES = {xvi+390},
      ISBN = {0-691-04991-2},
   MRCLASS = {18Gxx (01A75)},
  MRNUMBER = {1731415 (2000h:18022)},
}

@book {Browncohomology,
    AUTHOR = {Brown, Kenneth S.},
     TITLE = {Cohomology of groups},
    SERIES = {Graduate Texts in Mathematics},
    VOLUME = {87},
      NOTE = {Corrected reprint of the 1982 original},
 PUBLISHER = {Springer-Verlag},
   ADDRESS = {New York},
      YEAR = {1994},
     PAGES = {x+306},
      ISBN = {0-387-90688-6},
   MRCLASS = {20J05 (20-02)},
  MRNUMBER = {1324339 (96a:20072)},
}

@article {TopFinite1,
    AUTHOR = {Gray, Robert D. and Steinberg, Benjamin},
     TITLE = {Topological finiteness properties of monoids, {I}:
              {F}oundations},
   JOURNAL = {Algebr. Geom. Topol.},
  FJOURNAL = {Algebraic \& Geometric Topology},
    VOLUME = {22},
      YEAR = {2022},
    NUMBER = {7},
     PAGES = {3083--3170},
      ISSN = {1472-2747,1472-2739},
   MRCLASS = {20M05 (20F10 20F65 20J05 20M50 57M07)},
  MRNUMBER = {4545915},
MRREVIEWER = {Ulrich\ Knauer},
       DOI = {10.2140/agt.2022.22.3083},
       URL = {https://doi.org/10.2140/agt.2022.22.3083},
}

@article {Fiedor,
    AUTHOR = {Fiedorowicz, Z.},
     TITLE = {Classifying spaces of topological monoids and categories},
   JOURNAL = {Amer. J. Math.},
  FJOURNAL = {American Journal of Mathematics},
    VOLUME = {106},
      YEAR = {1984},
    NUMBER = {2},
     PAGES = {301--350},
      ISSN = {0002-9327,1080-6377},
   MRCLASS = {55U40 (18B30)},
  MRNUMBER = {737777},
MRREVIEWER = {R.\ K.\ Lashof},
       DOI = {10.2307/2374307},
       URL = {https://doi.org/10.2307/2374307},
}

@ARTICLE{homotopysmall,
       author = {{Sweeney}, Dennis},
        title = "{Homotopy types of small semigroups}",
      journal = {arXiv e-prints},
     keywords = {Mathematics - Algebraic Topology, 20M50 (Primary) 55U10, 55-08, 55-11 (Secondary)},
         year = 2025,
        month = feb,
          eid = {arXiv:2502.05276},
        volume = {arXiv:2502.05276},
          doi = {10.48550/arXiv.2502.05276},
archivePrefix = {arXiv},
       eprint = {2502.05276},
 primaryClass = {math.AT},
       adsurl = {https://ui.adsabs.harvard.edu/abs/2025arXiv250205276S},
      adsnote = {Provided by the SAO/NASA Astrophysics Data System}
}

@book {baueshomotopy,
    AUTHOR = {Baues, Hans-Joachim},
     TITLE = {Homotopy type and homology},
    SERIES = {Oxford Mathematical Monographs},
      NOTE = {Oxford Science Publications},
 PUBLISHER = {The Clarendon Press, Oxford University Press, New York},
      YEAR = {1996},
     PAGES = {xii+489},
      ISBN = {0-19-851482-4},
   MRCLASS = {55-02 (55N10 55P15 55P20 55P30 55S45)},
  MRNUMBER = {1404516},
MRREVIEWER = {Dominique\ Arlettaz},
}

@preamble{
   "\def\cprime{$'$} "
}

\end{document}